\DeclareMathOperator{\red}{red}
\newcommand{\of}{\circ}
\newtheorem{thm}{Theorem}[section]
\newtheorem{lem}[thm]{Lemma}
\newtheorem{prop}[thm]{Proposition}
\newtheorem{cor}[thm]{Corollary}
\theoremstyle{remark}
\newtheorem{rmk}[thm]{Remark}
\newtheorem{example}[thm]{Example}
\newtheorem{nota}[thm]{Notation}
\theoremstyle{definition}
\newtheorem{defn}[thm]{Definition}
\newtheorem{prop-def}[thm]{Proposition-Definition}
\newtheorem{q}{Question}
\DeclareMathOperator{\Pic}{Pic}
\DeclareMathOperator{\Div}{Div}
\DeclareMathOperator{\lct}{lct}
\DeclareMathOperator{\Supp}{Supp}
\DeclareMathOperator{\Proj}{Proj}
\newcommand{\Z}{\mathbb{Z}}
\newcommand{\Q}{\mathbb{Q}}
\newcommand{\PP}{\mathbb{P}}
\newcommand{\C}{\mathbb C}
\newcommand{\A}{\mathbb A}
\newcommand{\Oc}{\mathcal O}
\newcommand{\J}{\mathcal J}
\newcommand{\ceil}[1]{\left\lceil{#1}\right\rceil}
\newcommand{\floor}[1]{\left\lfloor{#1}\right\rfloor}
\begin{document}

\title[Contribution of jumbing numbers by exceptional divisors]{Contribution of jumping numbers by exceptional divisors}

\author{Hans Baumers}

\author[Willem Veys]{Willem Veys\vspace{0.5cm}\\\tiny{with an appendix by Karen E.\ Smith and Kevin Tucker}}

\address{KU Leuven\\ Department of Mathematics \\ Celestijnenlaan 200B box 2400\\ BE-3001 Leuven \\ Belgium}

\email{Hans.Baumers@gmail.com, Wim.Veys@kuleuven.be}

\address{Department of Mathematics \\ University of Michigan \\ Ann Arbor \\ MI 48109 \\ USA}
\email{kesmith@umich.edu}
\address{Department of Mathematics, Statistics, and Computer Science\\ University of Illinois at Chicago\\Chicago\\  IL 60607 \\ USA}
\email{kftucker@uic.edu}

\thanks{The first author is supported by a PhD fellowship of the Research Foundation - Flanders (FWO)}

\begin{abstract}
We investigate some necessary and sufficient conditions for an exceptional divisor to contribute jumping numbers of an effective divisor on a variety of arbitrary dimension, inspired by the results for curves on surfaces by Smith and Thompson \cite{ST07} and Tucker \cite{Tuc10}. In particular, we construct an example of an exceptional divisor that is not contracted in the log canonical model, and does not contribute any jumping numbers.
\end{abstract}

\maketitle

\section{Introduction}\label{sec:intro}

%\textcolor{red}{In two dimensions, we have contribution of jumping numbers if and only if it survives in the log canonical model, and this can be decided by the intersections with other components of $\pi^*D$.
%This raises the following questions in higher dimension.
%\begin{enumerate}
%\item Is there a link between surviving in the log canonical model and contribution of jumping numbers?
%\item Can we say something about contribution of jumping numbers by looking at the intersections with other components?
%\end{enumerate}
%}
%
%Let $D$ be an effective divisor on a smooth algebraic variety $X$. For any positive rational number $c$, one can construct its \emph{multiplier ideal} $\J(X,cD)$. This way, we obtain a decreasing chain of ideal sheaves on $X$, encoding the singularities of the pair $(X,D)$. For small numbers $\varepsilon$, we have $\J(X,(c+\varepsilon)D) = \J(X,cD)$, and the numbers $\lambda$ for which $\J(X,(\lambda-\varepsilon)D)\supsetneq\J(X,\lambda D)$ are the \emph{jumping numbers} of $(X,D)$.

The multiplier ideals $\J(X,\lambda D)$ associated to an effective divisor $D$ on an algebraic variety $X$ encode subtle information about the singularities of the pair $(X,D)$. They form a chain of $\Oc_X$-ideals $\J(X,\lambda D)$, which decrease when $\lambda$ increases, but remain the same after a slight increase of $\lambda$. The values of $\lambda$ where the multiplier ideals change are called the jumping numbers of the pair $(X,D)$. These geometric invariants where first studied in \cite{ELSV04}, but appeared earlier in different contexts, in \cite{Lib83}, \cite{LV90}, \cite{Vaq92} and \cite{Vaq94}. The smallest jumping number is the log canonical threshold. It has been studied thoroughly in e.g.\ \cite{Kol97} and \cite{Mus12}.

%\textcolor{red}{Short history? \cite{ELSV04}, \cite{Lib83}, \cite{LV90}, \cite{Vaq92}, \cite{Vaq94}...}

The multiplier ideals, and hence the jumping numbers, are computed using a log resolution of the pair $(X,D)$, so it is not a surprise that the exceptional divisors play an important role.
Smith and Thompson \cite{ST07}, and later Tucker \cite{Tuc10}, studied which exceptional divisors in an embedded resolution of $(X,D)$ are `relevant' for the computation of jumping numbers, introducing the notion `contribution of jumping numbers by exceptional divisors'. When $C$ is a curve on a surface $X$ with at most rational singularities, they found a geometrical characterization of exceptional divisors contributing jumping numbers by looking at the intersections with other components of the total transform of $C$ in the minimal resolution of $(X,C)$. They also prove that, if an exceptional divisor $E$ contributes a jumping number, it will always contribute the number $1-1/a$, where $a$ is the multiplicity of $E$ in the total transform of $C$. It turns out that, in this dimension, the contributing divisors coincide with the ones that are not contracted in the log canonical model of $(X,D)$ (see Definition \ref{def:lcmodel}).

The goal of this paper is to study to what extent these results can be generalized to higher dimensional varieties. In particular, we raise three questions, and formulate answers to each of them.

The first, and, in our opinion, the most important question, treats the relation between the exceptional divisors surviving in the log canonical model and those contributing jumping numbers, which was suggested by Smith and Thomspon in \cite{ST07}. We construct an example where an exceptional divisor does not contribute any jumping numbers, but survives in the log canonical model.

The second question is whether or not we can make conclusions about contribution of jumping numbers by a certain divisor, just by looking at the intersections with the other components of the total transform of $D$ in a log resolution. We encounter a big difference with the two-dimensional case here. In a log resolution of a curve on a surface with at most rational singularities, all exceptional divisors are projective lines, but in higher dimensions, there is a wide range of possibilities. We will study some specific cases where the intersection configuration contains enough information to decide whether or not an exceptional divisor contributes jumping numbers, and show that this does not hold in general by constructing a counterexample.

A final question we investigate is whether or not the number $1-1/a$ is always a jumping number if $E$ contributes. Here, $a$ is the multiplicity of $E$ in the total transform of $D$. Also here, the answer will be negative.

We start in Section \ref{sec:Basics} with introducing our basic concepts, such as multiplier ideals, jumping numbers and the notion of contribution of jumping numbers.
Next, in Section \ref{sec:birgeom}, we recall some definitions in birational geometry, and prove a contraction criterion for exceptional divisors in the log canonical model.
In Section \ref{sec:prelim}, we recall the results of Smith and Thompson \cite{ST07} and Tucker \cite{Tuc10} in the two-dimensional case. Also, we present some preliminary results in arbitrary dimension, which we use in section \ref{sec:positiveanswers} to show that the results of \cite{ST07} and \cite{Tuc10} still hold in higher dimensions for exceptional divisors that are not too complicated, for example exceptional divisors isomorphic to the projective space.
In Section \ref{sec:counterexample1}, we show by example that, in general, contribution of jumping numbers cannot be seen from the intersection configuration on the exceptional divisor.
Finally, in Section \ref{sec:couterexample2}, we give an example of an exceptional divisor that is not contracted in the log canonical model, and does not contribute any jumping numbers.

\subsection*{Acknowledgement} We would like to thank Christopher Hacon for his valuable help concerning the part on birational geometry, and in particular the contraction criterion (Lemma \ref{lem:contraction_in_lcmodel}).

%\section{Preliminaries}\todo{Opsplitsen? Is veel langer dan de andere secties}
\section{Basic notions}\label{sec:Basics}
We start with some definitions that will be used throughout this paper.
\begin{defn}
A \emph{variety} is an integral scheme of finite type over $\C$.
\end{defn}
\begin{defn}
A \emph{$\Q$-divisor} on a variety $X$ is an element of $\Div X\otimes_\Z\Q$. Equivalently, a $\Q$-divisor is of the form $F=\sum_{i=1}^n a_i F_i$, where the $F_i$ are irreducible Weil divisors on $X$ and the $a_i\in\Q$. A $\Q$-divisor $F$ is called $\Q$-Cartier if $mF$ is a Cartier divisor for some $m\in\Z$.
\end{defn}
\begin{defn}
If $F=\sum a_iF_i$ is a $\Q$-divisor, then the \emph{round down} of $F$ is $\floor{F}:=\sum\floor{a_i}F_i$.
\end{defn}
\begin{defn}
If $X$ is a normal variety and $D$ a $\Q$-divisor on $X$, then a \emph{log resolution} of $(X,D)$ is a proper, birational morphism $\pi:Y\to X$, such that
\begin{itemize}
\item $Y$ is smooth,
\item $\pi^{-1}(D\cup X_{Sing})$ is a strict normal crossings divisor,
\item $\pi$ defines an isomorphism outside $\pi^{-1}(D\cup X_{Sing})$.
\end{itemize}
\end{defn}

%Throughout this section, \todo{don't! make every defn/thm readable by itself!} we consider a pair $(X,D)$, where $X$ is a smooth complex algebraic variety, and $D$ is an effective integral divisor on $X$. We fix a log resolution $\pi:Y\to X$ of $(X,D)$. % This is a proper, birational morphism, such that $Y$ is smooth, $\pi^{-1}D$ is a strict normal crossings divisor, and $\pi$ defines an isomorphism outside $D$. %We denote the exceptional divisors by $E_i$, $i\in I$.\todo{confusing, in counterexamples different notations!}
%We denote $\pi^*D = \sum_{i\in I}a_i E_i$.

\begin{defn}
The \emph{relative canonical divisor} of a birational morphism of smooth varieties $\pi:Y\to X$ is \[K_\pi = K_Y-\pi^*K_X,\] where $K_X$ and $K_Y$ are the canonical divisors of $X$ and $Y$, respectively.
\end{defn}
\begin{rmk}
Although $K_Y$ and $K_X$ are only defined as divisor classes, we often consider $K_\pi$ as an effective divisor, since its divisor class contains a unique effective divisor supported on the exceptional locus of $\pi$.
\end{rmk}

Now we are ready to introduce multiplier ideals.
\begin{defn}
Let $X$ be a smooth variety and $D$ an effective divisor on $X$. Let $\pi:Y\to X$ be a log resolution of $(X,D)$. If $c$ is a positive rational number, we define the \emph{multiplier ideal} of $(X,D)$ with coefficient $c$ as \[\J(X,c D) := \pi_*\Oc_Y(K_\pi-\floor{c \pi^*D}).\]
\end{defn}
Note that, since $\pi_*\Oc_Y(K_\pi)=\Oc_X$, we have $\J(X,cD)\subseteq\Oc_X$ for all $c\in \Q_{>0}$, which justifies the name multiplier \emph{ideal}.
\begin{prop}[{\cite[Proposition 7.5]{EV92}}]
The multiplier ideal is independent of the chosen log resolution.
\end{prop}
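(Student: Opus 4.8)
The plan is to deduce independence of the log resolution from two inputs: any two log resolutions of $(X,D)$ are dominated by a common one, and the sheaf $\Oc_Y(K_\pi-\floor{c\pi^*D})$ can be tracked through a further blow-up by means of the projection formula.

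First I would reduce to comparable resolutions. Given log resolutions $\pi_1\colon Y_1\to X$ and $\pi_2\colon Y_2\to X$, let $W\subseteq Y_1\times_X Y_2$ be the closure of the graph of the induced birational map from $Y_1$ to $Y_2$, and by Hironaka choose a proper birational morphism $Y_3\to W$ with $Y_3$ smooth such that $\pi_3\colon Y_3\to X$ is again a log resolution of $(X,D)$; the two projections give proper birational morphisms $\mu_i\colon Y_3\to Y_i$ of smooth varieties with $\pi_i\of\mu_i=\pi_3$. Hence it suffices to prove the following: if $\pi\colon Y\to X$ is a log resolution and $\mu\colon Y'\to Y$ is a proper birational morphism of smooth varieties such that $\pi'=\pi\of\mu$ is again a log resolution, then $\pi$ and $\pi'$ compute the same multiplier ideal.

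Next I would unwind this using standard identities. Put $F=c\pi^*D$ and let $\{F\}=F-\floor F$ be its fractional part; because $\pi$ is a log resolution, $F$ and hence $\{F\}$ have simple normal crossing support on $Y$. From $K_{\pi'}=K_\mu+\mu^*K_\pi$, from $c(\pi')^*D=\mu^*F$, and from $\floor{\mu^*F}=\mu^*\floor F+\floor{\mu^*\{F\}}$ (valid because $\mu^*\floor F$ is an integral divisor), one gets
\[
K_{\pi'}-\floor{c\,(\pi')^*D}=\bigl(K_\mu-\floor{\mu^*\{F\}}\bigr)+\mu^*G,\qquad G:=K_\pi-\floor{c\,\pi^*D},
\]
where $G$ is a Cartier divisor on $Y$ since $Y$ is smooth. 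Applying $\pi'_*=\pi_*\mu_*$ together with the projection formula for the line bundle $\Oc_Y(G)$, the whole statement collapses to the single identity $\mu_*\Oc_{Y'}\bigl(K_\mu-\floor{\mu^*\{F\}}\bigr)=\Oc_Y$.

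The hard part is this last identity. Write $N=K_\mu-\floor{\mu^*\{F\}}$. The inclusion $\mu_*\Oc_{Y'}(N)\subseteq\Oc_Y$ is formal: any local section $h$ satisfies $\operatorname{div}_{Y'}(h)+K_\mu\ge\floor{\mu^*\{F\}}\ge 0$, so pushing divisors forward, and using that $K_\mu$ is $\mu$-exceptional, gives $\operatorname{div}_Y(h)\ge 0$. For the reverse inclusion it suffices to show that $N$ is effective, for then $\Oc_{Y'}\subseteq\Oc_{Y'}(N)$ pushes forward to $\Oc_Y\subseteq\mu_*\Oc_{Y'}(N)$. On a non-$\mu$-exceptional prime $E\subseteq Y'$ one has $\operatorname{ord}_E K_\mu=0$ while $\operatorname{ord}_E(\mu^*\{F\})$ is the coefficient in $\{F\}$ of $\mu(E)$, which is $<1$, so $\operatorname{ord}_E N=0$. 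On a $\mu$-exceptional prime $E$ the inequality $\operatorname{ord}_E K_\mu\ge\floor{\operatorname{ord}_E(\mu^*\{F\})}$ is equivalent to $\operatorname{ord}_E K_\mu-\operatorname{ord}_E(\mu^*\{F\})>-1$, that is, to positivity of the log discrepancy of $E$ over the pair $(Y,\{F\})$; and since $Y$ is smooth and $\{F\}$ is simple normal crossing with all coefficients $<1$, this positivity holds. I would establish it by realizing $E$ through a sequence of blow-ups along smooth centers that preserve the normal crossing property, and checking by induction that the auxiliary divisors obtained as (total transform of $\{F\}$) minus (relative canonical divisor) stay simple normal crossing with all coefficients $<1$; then a blow-up along a center of codimension $c$, which lies on at most $c$ of those components, produces a new exceptional divisor of positive log discrepancy. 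This discrepancy estimate for simple normal crossing pairs is the only step that needs genuine work; alternatively one could invoke a relative vanishing theorem for $\mu_*\Oc_{Y'}(K_\mu-\floor{\mu^*\{F\}})$, but the estimate above is elementary and self-contained.
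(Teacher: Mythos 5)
The paper does not prove this proposition; it simply cites \cite[Proposition 7.5]{EV92}, so there is no internal argument to compare against. Your proposal is the standard proof of that cited result (it is essentially the argument of \cite[Theorem~9.2.18 and Lemma~9.2.19]{LazII04}): reduce to a dominating resolution via the graph closure and Hironaka, split $K_{\pi'}-\floor{c(\pi')^*D}$ into $\mu^*\bigl(K_\pi-\floor{c\pi^*D}\bigr)$ plus $K_\mu-\floor{\mu^*\{F\}}$ using $\floor{\mu^*F}=\mu^*\floor{F}+\floor{\mu^*\{F\}}$, apply the projection formula, and conclude from $\mu_*\Oc_{Y'}\bigl(K_\mu-\floor{\mu^*\{F\}}\bigr)=\Oc_Y$. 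All of these steps are correct, including the two inclusions in the last identity: the forward one by pushing forward effective divisors and using that $K_\mu$ is $\mu$-exceptional and $Y$ is normal, the reverse one by effectivity of $K_\mu-\floor{\mu^*\{F\}}$, which is exactly klt-ness of the simple normal crossing pair $(Y,\{F\})$ with coefficients in $[0,1)$.

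The only place where your sketch is thinner than it should be is the final discrepancy estimate. The reduction "realize $E$ through a sequence of blow-ups along smooth centers that preserve the normal crossing property" is not automatic: an arbitrary divisorial valuation over $Y$ need not arise from a tower of blow-ups whose centers have normal crossings with the accumulated boundary, so the induction as literally stated needs either an extra resolution argument or a reformulation (e.g.\ the direct multiplicity estimate of \cite[Lemma~2.29 and Corollary~2.31]{KM98}, which handles an arbitrary smooth center $Z$ by bounding $\sum_i b_i\,\mult_Z B_i$ by $\operatorname{codim}Z$, together with the monotonicity statement that lets one iterate). Since you explicitly flag this as the one step needing genuine work and offer the alternative of quoting the standard klt criterion (or relative vanishing), this is a presentational gap rather than a mathematical one; the proof is correct.
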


From the definition of multiplier ideals, it is easy to see that a small increase of the coefficient $c$ does not affect the multiplier ideal. This gives rise to the concept of jumping numbers.

\begin{prop-def}
Let $D$ be an effective divisor on a smooth variety $X$.
There exists a chain of rational numbers \[0=\lambda_0<\lambda_1<\lambda_2<\dots<\lambda_i<\lambda_{i+1}<\dots\] satisfying
\begin{itemize}
\item for $i\in\Z_{\geq 0}$ and $c\in[\lambda_i,\lambda_{i+1})$, we have $\J(X,cD)=\J(X,\lambda_iD)$,
\item for $i\in\Z_{\geq0}$, $\J(X,\lambda_iD)\supsetneq\J(\lambda_{i+1}D)$.
\end{itemize}
The numbers $\lambda_i$, $i\geq 1$, are called the \emph{jumping numbers} of $(X,D)$.
\end{prop-def}

If $\pi:Y\to X$ is a log resolution of $(X,D)$, we can denote $\pi^*D=\sum_{i\in I} a_iE_i$ and $K_\pi = \sum_{i\in I} k_i E_i$, where $E_i$, $i\in I$, are the irreducible components of $\pi^{-1}(D)$. Then it is easy to see that the jumping numbers are contained in the set \[\left\{\left.\frac{k_i+n}{a_i}\right| i\in I,\, n\in \Z_{>0}\right\}.\]
The numbers in this set are called the \emph{candidate jumping numbers}. If $E_1,\dots,E_n$ are irreducible components of $\pi^{-1}(D)$, we say that a candidate jumping number $\lambda$ is a candidate for $E=\sum_{i=1}^n E_i$ if $\lambda a_i\in \Z$ for $i=1,\dots,n$. In contrast to multiplier ideals and jumping numbers, the notion of candidate jumping numbers depends on the chosen log resolution.

The smallest candidate jumping number however, does not depend on the chosen log resolution, and is always a jumping number. It is called the \emph{log canonical threshold} and we denote it by $\lct(X,D)$.

\vspace{2mm}
Now we list some basic properties.
First note that if $c\in\Q_{>0}$, we have
\begin{align*}
\J(X,(c+1)D) &= \pi_*\Oc_Y(K_\pi-\floor{c\pi^*D}-\pi^*D)\\
&= \J(X,cD)\otimes\Oc_X(-D)
\end{align*}
by the projection formula. Therefore, $c$ is a jumping number if and only if $c+1$ is a jumping number. This is actually a special case of Skoda's Theorem (see \cite[9.3.24]{LazII04}).

It is also easy to see that if $\lambda$ is a candidate jumping number for the strict transform of one of the components of $D$, it is always a jumping number. In particular, the positive integers are always jumping numbers for the pair $(X,D)$.

%\begin{thm}[Skoda's theorem, see {\cite[9.3.24]{LazII04}}]
%With notations as above, we have \[\J(X,(c+1)D)=\Oc_X(-D)\otimes\J(X,c D).\]
%Hence, $\lambda\in\Q_{>0}$ is a jumping number if and only if $\lambda+1$ is a jumping number.
%\end{thm}
%\begin{rmk}
%\textcolor{red}{This is actually not really Skoda's theorem, but an easy application of the projection formula. Maybe not even state this as a theorem?}
%\end{rmk}
%\begin{prop}
%The positive integers are jumping numbers for the pair $(X,D)$.
%\end{prop}

The following theorem is a useful tool for proving statements about multiplier ideals.
\begin{thm}[Local Vanishing, {\cite[Theorem 9.4.1]{LazII04}}]\label{thm:local_vanishing}
Let $D$ be a divisor on a smooth variety $X$, and $\pi:Y\to X$ a log resolution of $(X,D)$. Then for every $c\in\Q$ we have \[R^i\pi_*\Oc_Y(K_\pi-\floor{cD})=0\text{ for }i>0.\]
\end{thm}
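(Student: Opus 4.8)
The plan is to reduce the statement to the relative Kawamata--Viehweg vanishing theorem by a short manipulation of divisors. Since the vanishing of the sheaves $R^i\pi_*$ can be checked locally on the base, I would first replace $X$ by an affine (hence quasi-projective) open subset, so that $\pi$ becomes a projective birational morphism; this places us in a setting where relative Kawamata--Viehweg vanishing is available.

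The key bookkeeping step is the identity
\[
K_\pi-\floor{c\pi^*D}=K_Y+\ceil{-\pi^*(K_X+cD)}.
\]
Indeed, since $X$ is smooth the divisor $K_X+cD$ is $\Q$-Cartier and $\pi^*K_X$ is an integral Cartier divisor, so
\[
\ceil{-\pi^*(K_X+cD)}=-\pi^*K_X+\ceil{-c\pi^*D}=-\pi^*K_X-\floor{c\pi^*D},
\]
and adding $K_Y$ recovers $K_Y-\pi^*K_X-\floor{c\pi^*D}=K_\pi-\floor{c\pi^*D}$. Hence it suffices to prove $R^i\pi_*\Oc_Y(K_Y+\ceil{L})=0$ for $i>0$, where $L:=-\pi^*(K_X+cD)$.

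Next I would verify that $L$ satisfies the hypotheses of relative Kawamata--Viehweg vanishing: that its fractional part is supported on a strict normal crossings divisor, and that $L$ is $\pi$-nef and $\pi$-big. The fractional part of $L$ agrees with that of $-c\pi^*D$, which is supported on $\pi^{-1}(D)$ and therefore has strict normal crossings support by the definition of a log resolution. For the positivity, note that $\pi^*$ of any $\Q$-Cartier divisor is numerically trivial on every curve contracted by $\pi$ (by the projection formula), so $L$ is $\pi$-nef; and $L$ is automatically $\pi$-big because $\pi$ is birational, the generic fibre being a single point. Applying the relative form of the Kawamata--Viehweg vanishing theorem (see \cite{LazII04}) then gives $R^i\pi_*\Oc_Y(K_Y+\ceil{L})=0$ for $i>0$, which is the desired vanishing.

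All of the content of this argument is packaged inside the relative Kawamata--Viehweg vanishing theorem; the rest is formal. Consequently, if one wanted a proof that does not simply cite that theorem, the main obstacle would be to establish the relative vanishing itself --- for example by compactifying $\pi$ and reducing to a global Nadel or Kodaira-type vanishing statement, or via the standard cyclic covering trick used to clear the fractional part. A minor technical point to watch is that some formulations of relative vanishing require $\pi$ to be projective rather than merely proper, which is precisely why I would localize on $X$ at the very beginning.
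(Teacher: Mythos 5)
Your main reduction is correct, and it is in fact the standard proof behind the paper's citation (the paper itself gives no argument, it quotes \cite[Theorem 9.4.1]{LazII04}, which is proved exactly this way): the identity $K_\pi-\floor{c\pi^*D}=K_Y+\ceil{-\pi^*(K_X+cD)}$ is verified correctly, the fractional part of $-\pi^*(K_X+cD)$ is supported on the strict normal crossings divisor $\pi^{-1}(D)$, and since this divisor is the pullback of a $\Q$-Cartier divisor it is $\pi$-numerically trivial, hence $\pi$-nef, and automatically $\pi$-big because $\pi$ is birational. Relative Kawamata--Viehweg vanishing then gives the statement, for any $c\in\Q$ and $D$ not necessarily effective.

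The one step that does not work as written is the opening reduction: restricting to an affine open subset of $X$ makes the base quasi-projective, but it does \emph{not} make the proper morphism $\pi$ projective --- properness over an affine base is strictly weaker than projectivity, and the paper's definition of a log resolution only requires $\pi$ to be proper, whereas the form of relative vanishing you invoke is usually stated for projective morphisms. Localizing on $X$ is still harmless (the vanishing is local on the base), but it does not buy you projectivity. The standard repairs are: either quote a version of relative Kawamata--Viehweg (or the surrounding Grauert--Riemenschneider circle of results) valid for proper morphisms; or dominate the given resolution by $\theta\colon Y'\to Y$ such that $\pi\circ\theta$ is a composite of blow-ups along smooth centers, hence projective over $X$ (and then $\theta$ is projective as well, being an $X$-morphism from a variety projective over $X$ to one separated over $X$), prove the vanishing for $\pi\circ\theta$ and for $\theta$ by your argument, check that $\theta_*\Oc_{Y'}\bigl(K_{\pi\circ\theta}-\floor{c(\pi\circ\theta)^*D}\bigr)=\Oc_Y\bigl(K_\pi-\floor{c\pi^*D}\bigr)$ (the usual birational transformation rule), and conclude by the Leray spectral sequence. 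With that patch your argument is complete and coincides with the textbook proof.
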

Now we define contribution of jumping numbers by an exceptional divisor. This is a notion that indicates which exceptional divisors are responsible for the jumping numbers.
\begin{defn}[{\cite[Definition 2.1]{ST07}}]
Let $D$ be an effective divisor on a smooth variety $X$. Let $E$ be a reduced exceptional divisor (possibly reducible) in some log resolution $\pi:Y\to X$ of $(X,D)$, and $\lambda$ a candidate jumping number for $E$. We say $E$ \emph{contributes} $\lambda$ as a jumping number if
\[\J(X,\lambda D)\subsetneq \pi_*\Oc_Y(K_\pi-\floor{\lambda\pi^*D}+E).\]
\end{defn}%\todo{Also some remarks on this, relate to candidates, maybe introduce critical contribution... See what's needed.}
It is easy to see that this notion depends only on the valuations defined by the components of $E$. In particular it is independent of the choice of log resolution. %This definition is independent of the log resolution in the sense that if $E=E_1+\dots+ E_n$ for prime divisors $E_1,\dots,E_n$, $\pi':Y'\to X$ is a different log resolution, and $E_i'$ is a $\pi'$-exceptional divisor defining the same valuation as $E_i$ for every $i\in\{1,\dots,n\}$, then $E$ contributes $\lambda$ as a jumping number if and only if $E'=E_1'+\dots +E_n'$ does so. %This can be seen easily because the notion of contribution depends only on the valuation of $E$.%An argument can be found in \cite{ST07}.\todo{}

\section{Birational geometry and the log canonical model}\label{sec:birgeom}

\begin{nota}
If $\pi:Y\to X$ is a birational morphism of normal algebraic varieties, and $D=\sum a_iD_i$ a $\Q$-divisor on $X$, where the $D_i$ are irreducible divisors, then we denote $\tilde D = \sum a_i\tilde D_i$, where $\tilde D_i$ is the strict transform of $D_i$ for every $i$.
\end{nota}

Let $X$ be a normal variety. Since the singular locus of $X$ has codimension at least 2, we can define the canonical divisor class $K_X$ by extending the canonical divisor on the non-singular locus of $X$ to all of $X$. Let $D$ be a $\Q$-divisor on $X$, such that $K_X+D$ is $\Q$-Cartier, and consider a log resolution $\pi:Y\to X$ of $(X,D)$ with exceptional prime divisors $E_i$, $i\in I$. % irreducible components of $\pi^{-1}(D\cup X_{Sing})$. %Denote by $\tilde D$ the strict transform of $D$\todo{or actually $\pi_*^{-1}D$}, and by $E_i$, $i\in I$ the exceptional divisors.
If we choose appropriate representatives of $K_X$ and $K_Y$, then we can write
\[K_Y +\tilde D + \sum_{i\in I} E_i = \pi^*(K_X+D) + \sum_{i\in I} a(E_i,X,D)E_i\]
for some $a(E_i,X,D)\in \Q$. The number $a(E_i,X,D)$ is called the \emph{log discrepancy} of $E_i$ with respect to $(X,D)$. If $E$ is a prime divisor in a log resolution $f:Y\to X$, and $E'$ is a prime divisor in an other resolution $f':Y'\to X$ defining the same valuation, then $a(E,X,D)=a(E',X,D)$, so we can say that the discrepancy does not depend on the log resolution in which we consider a divisor.

If $D=\sum a_iD_i$, then it will be useful to extend the definition of log discrepancies to non-exceptional divisor by putting $a(D_i,X,D) = 1-a_i$ for all $i$ and $a(F,X,D)=0$ if $F$ is a prime divisor on $X$ different from the $D_i$.

\begin{defn}[{\cite[Definition 2.34]{KM98}}]\label{defn:logcanonical}
Let $X$ be a normal variety and $D$ a divisor on $X$ such that $K_X+D$ is $\Q$-Cartier.
We say that $(X,D)$ has \emph{log canonical singularities}, or simply that $(X,D)$ is \emph{log canonical} if $a(E,X,D)\geq 0$ for all exceptional divisors $E$ in all log resolutions of $(X,D)$. By \cite[Corollary 2.32]{KM98}, this is equivalent to $a(E_i,X,D)\geq 0$ for all exceptional divisors in a fixed resolution. Also, if $(X,D=\sum a_i D_i)$ is log canonical, then by \cite[Corollary 2.31]{KM98}, $a_i\leq 1$ for all $i$.
\end{defn}
\begin{defn}[{\cite[Definition 2.37]{KM98}}]
If $(X,D)$ is as in Definition \ref{defn:logcanonical}, and if $D=\sum a_iD_i$ with $0< a_i\leq 1$ for all $i$, then we say that $(X,D)$ is \emph{dlt} or \emph{divisorially log terminal} if there is a closed subset $Z\subset X$ such that $X\backslash Z$ is smooth, $D|_{X\backslash Z}$ is a simple normal crossings divisor, and there exists a log resolution $\pi:Y\to X$ of $(X,D)$ such that $\pi^{-1}(Z)$ has pure codimension one and $a(E,X,D)>0$ for every irreducible divisor $E\subseteq \pi^{-1}(Z)$.
\end{defn}

Log canonical pairs can also be described using multiplier ideals (see \cite[Definition 9.3.9]{LazII04}).
\begin{prop}
If $D$ is an effective divisor on a smooth variety $X$, then $(X,D)$ is log canonical if and only if \[\J(X,(1-\varepsilon)D) = \Oc_X\text{ for all }0<\varepsilon< 1.\] This happens if and only if $\lct(X,D)\geq 1$.
\end{prop}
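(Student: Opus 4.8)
The plan is to prove the two asserted equivalences separately, using the chain of candidate jumping numbers as the bridge. Fix a log resolution $\pi\colon Y\to X$ of $(X,D)$ and write $\pi^{*}D=\sum_{i\in I}a_{i}E_{i}$ and $K_{\pi}=\sum_{i\in I}k_{i}E_{i}$, the $E_{i}$ being the prime components of $\pi^{-1}(D)$, with $k_{i}=0$ whenever $E_{i}$ is the strict transform of a component of $D$. Note that $a_{i}>0$ for every $i\in I$ (each $E_{i}$ lies in $\Supp\pi^{*}D$), and that $K_{X}+D$ is automatically $\Q$-Cartier since $X$ is smooth, so all the notions below are defined.

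\emph{Second equivalence} ($\J(X,(1-\varepsilon)D)=\Oc_{X}$ for all $0<\varepsilon<1 \iff \lct(X,D)\ge 1$). This falls out of the jumping-number chain of the Proposition-Definition: for $c\in[0,\lambda_{1})$ the multiplier ideal is constant, equal to $\J(X,0\cdot D)=\pi_{*}\Oc_{Y}(K_{\pi})=\Oc_{X}$, where $\lambda_{1}=\lct(X,D)$ is the first jumping number, whereas $\J(X,cD)\subseteq\J(X,\lambda_{1}D)\subsetneq\Oc_{X}$ for $c\ge\lambda_{1}$. Since $\{1-\varepsilon:0<\varepsilon<1\}=(0,1)$, the condition that $\J(X,(1-\varepsilon)D)=\Oc_{X}$ for all $0<\varepsilon<1$ is exactly $(0,1)\subseteq[0,\lambda_{1})$, i.e.\ $\lct(X,D)\ge 1$.

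\emph{First equivalence} ($(X,D)$ log canonical $\iff \lct(X,D)\ge 1$). I would first rewrite $\lct(X,D)=\min_{i\in I}\frac{k_{i}+1}{a_{i}}$, the smallest candidate jumping number. For a strict transform $E_{i}=\tilde{D}_{j}$ of a component $D_{j}$ occurring in $D$ with multiplicity $m_{j}\ge 1$ one has $k_{i}=0$ and $a_{i}=m_{j}$, so $\frac{k_{i}+1}{a_{i}}=\frac{1}{m_{j}}\ge 1$ iff $m_{j}=1$. For an exceptional $E_{i}$, substituting $\pi^{*}K_{X}=K_{Y}-K_{\pi}$ into the defining identity $K_{Y}+\tilde{D}+\sum E_{i}=\pi^{*}(K_{X}+D)+\sum a(E_{i},X,D)E_{i}$ and comparing coefficients gives the clean formula $a(E_{i},X,D)=k_{i}+1-a_{i}$, so $a(E_{i},X,D)\ge 0$ iff $\frac{k_{i}+1}{a_{i}}\ge 1$. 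Hence $\lct(X,D)\ge 1$ is equivalent to the conjunction of: $a(E_{i},X,D)\ge 0$ for every exceptional $E_{i}$; and $m_{j}=1$ for every component $D_{j}$ of $D$. By Definition~\ref{defn:logcanonical} the first condition is precisely the definition of $(X,D)$ being log canonical (the remaining exceptional divisors of $\pi$, those not lying over $D$, have log discrepancy $\geq 1$ automatically since $X$ is smooth, so they impose no condition).

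The main obstacle is the last reconciliation: the extra requirement $m_{j}=1$ is not part of Definition~\ref{defn:logcanonical}, so a priori $(X,D)$ log canonical does not visibly force it. This is exactly where something nontrivial enters, and I would invoke \cite[Corollary~2.31]{KM98}: a log canonical pair $(X,\sum m_{j}D_{j})$ automatically has $m_{j}\le 1$, hence $m_{j}=1$ as $D$ is an integral effective divisor; the reverse implication being trivial. Therefore $(X,D)$ is log canonical iff $\lct(X,D)\ge 1$, and combined with the second equivalence all three statements coincide. Everything else is the one-line discrepancy computation and formal manipulation of the jumping-number chain.
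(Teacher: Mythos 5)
Your proof is correct and is essentially the paper's argument fleshed out: the paper condenses everything into the single formula $\J(X,(1-\varepsilon)D)=\pi_*\Oc_Y\bigl(\sum_i\ceil{a(E_i,X,D)-1+\varepsilon a_i}E_i\bigr)$ on a log resolution and says the rest "follows easily," while you unwind the same discrepancy identity $a(E_i,X,D)=k_i+1-a_i$, routing one equivalence through the jumping-number chain and the other through the candidate-number description of $\lct$, and correctly flag that the coefficient bound $m_j\le 1$ from \cite[Corollary 2.31]{KM98} (already noted in Definition \ref{defn:logcanonical}) is needed to reconcile the strict-transform contribution. No gaps.
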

\begin{proof}
If $\pi:Y\to X$ is a log resolution of $(X,D)$, and $\pi^*D=\sum_{i\in I}a_iE_i$, one can see that
\[\J(X,(1-\varepsilon)D) = \pi_*\Oc_Y\left(\sum_{i\in I}\ceil{a(E_i,X,D)-1+\varepsilon a_i}E_i\right),\]
and then the statement follows easily.
\end{proof}

%\begin{defn}
%%Let $X$ be a normal variety and $D=\sum a_iD_i$ a $\Q$-divisor on $X$, \textcolor{red}{such that $K_X+D$ is $\Q$-Cartier?}.
%Let $(X,D)$ be a log canonical pair and $f:X\to S$ a proper morphism. A pair $(X_m,D_m)$ with a rational map $\phi:X\dasharrow X_m$ over $S$ is a \emph{minimal model} of $(X,D)$ over $S$ if $f_m:X_m\to S$ is proper, $\phi^{-1}$ has no exceptional divisors, $\phi_*D = D_m$, $K_{X_m}+D_m$ is $f_m$-nef \todo{define!}, and $a(E,X,D)< a(E,X_m,D_m)$ for every $\phi$-exceptional divisor $E\subset X$. \textcolor{red}{(the last can be stated as ``$\phi$ is $(K_X+D)$-negative or non-positive.)}
%
%A pair $(X_c,D_c)$ with a rational map $\phi:X\dasharrow X_c$ over $S$ is a \emph{log canonical model} of $(X,D)$ over $S$ if $f_c:X_c\to S$ is proper, $\phi^{-1}$ has no exceptional divisors, $\phi_*D = D_c$, $K_{X_c}+D_c$ is $f_c$-ample \todo{define!}, and $a(E,X,D)\leq a(E,X_c,D_c)$ for every $\phi$-exceptional divisor $E\subset X$.
%\end{defn}

\begin{defn}[{\cite[Definition 2.4]{Xu16}}, {\cite{HX13}}]\label{def:dltmodel}
If $X$ is a normal variety and $D=\sum a_iD_i$ a $\Q$-divisor on $X$, where the $D_i$ are distinct prime divisors and $0<a_i\leq1$, then a \emph{dlt model} of $(X,D)$ is a proper birational morphism $\phi_m:X_m\to X$ such that
\begin{enumerate}
\item the pair $(X_m, \tilde D+E_{\phi_m})$ is dlt, where $E_{\phi_m}$ is the reduced exceptional divisor of $\phi_m$, and
\item $K_{X_m}+\tilde D+E_{\phi_m}$ is $\phi_m$-nef, i.e., its restriction to any fibre of $\phi_m$ is nef.
\end{enumerate}
If $\pi:Y\to X$ is a log resolution of $(X,D)$ and $\phi_m:X_m\to X$ a dlt model, then there is an induced birational map $\phi:Y\dasharrow X_m$. We say that $X_m$ is a \emph{minimal dlt model of $(X,D)$ with respect to $\pi$} if $\phi^{-1}$ contracts no divisors, and $a(E,Y,\tilde D+E_{\pi})>a(E,X_m,\tilde D+E_{\phi_m})$ for all $\phi$-exceptional divisors $E\subset Y$. Here, $E_\pi$ denotes the reduced exceptional divisor of $\pi$.
\end{defn}

\begin{defn}[{\cite[Definition 2.1]{OX12}}]\label{def:lcmodel}
%If $X$ is a normal variety and $D=\sum a_iD_i$ a $\Q$-divisor on $X$, where the $D_i$ are distinct prime divisors and $0< a_i\leq1$, then
If $X$ and $D$ are as in Definition \ref{def:dltmodel}, then a \emph{log canonical model} of $(X,D)$ is a proper birational morphism $\phi_c:X_c\to X$ such that
\begin{enumerate}
\item the pair $(X_c,\tilde D+E_{\phi_c})$ is log canonical, where $E_{\phi_c}$ is the reduced exceptional divisor of $\phi_c$, and
\item $K_{X_c}+\tilde D+E_{\phi_c}$ is $\phi_c$-ample, i.e., its restriction to any fibre of $\phi_c$ is ample.
\end{enumerate}
\end{defn}
%\begin{rmk}
%If $\pi:Y\to X$ is a log resolution of $(X,D)$, then the log canonical model of $(X,D)$ is actually the log canonical model of $(Y,\tilde D+E_\pi)$ over $(X,D)$, where $E_\pi$ is the reduced exceptional divisor of $\pi$.\todo{refer to literature? `this notion is described in...'}
%\end{rmk}
\begin{thm}[{\cite[Theorem 1.1, Proposition 2.1]{OX12}}, {\cite[Theorems 1.26, 1.32 and 1.34]{Kol13}}, {\cite[Lemma 2.4]{HX13}}]
If $X$ is a normal variety and $D=\sum a_iD_i$ a $\Q$-divisor on $X$, where the $D_i$ are distinct prime divisors and $0< a_i\leq1$, then there exists a unique log canonical model of $(X,D)$. A dlt model exists, but is not unique. However, if $\pi:Y\to X$ is a log resolution of $(X,D)$, then different minimal dlt models with respect to $\pi$ are isomorphic in codimension one.

If $X_m$ is a dlt model and $X_c$ the log canonical model, then there exists a morphism $X_m\to X_c$.
\end{thm}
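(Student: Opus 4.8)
This theorem collects standard consequences of the Minimal Model Program, and I would deduce it from the cited works rather than reprove it; here is how the ingredients fit together. Fix a log resolution $\pi:Y\to X$ of $(X,D)$ and set $B=\tilde D+E_\pi$, so that $(Y,B)$ is log smooth with all coefficients in $(0,1]$, in particular dlt. Since $\pi$ is projective birational, $K_Y+B$ is automatically $\pi$-pseudoeffective (the generic fibre of $\pi$ is a point), so one may run the $(K_Y+B)$-MMP over $X$ with scaling of a sufficiently general $\pi$-ample divisor; the essential — and only genuinely hard — input is that this program terminates, which for a log smooth pair with coefficients $\le 1$ over an arbitrary normal base is exactly \cite[Lemma 2.4]{HX13}, itself resting on the finite-generation theorems underlying the MMP. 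The output is a birational contraction $\phi:Y\dasharrow X_m$ over $X$, with $(X_m,\tilde D+E_{\phi_m})$ dlt and $K_{X_m}+\tilde D+E_{\phi_m}$ nef over $X$; since each step contracts only loci along which the log canonical divisor is relatively negative, $\phi^{-1}$ contracts no divisor and the discrepancy condition of Definition \ref{def:dltmodel} holds, so $X_m$ is a minimal dlt model of $(X,D)$ with respect to $\pi$. Different choices (of the scaling divisor, and of which extremal contraction to perform at each step) yield genuinely different models, so uniqueness fails; but any two minimal models over $X$ of the fixed pair $(Y,B)$ agree in codimension one — this is the negativity lemma — which gives the stated uniqueness in codimension one.

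For the log canonical model I would take $X_c=\Proj_X\bigoplus_{m\ge 0}(\phi_m)_*\Oc_{X_m}(\floor{m(K_{X_m}+\tilde D+E_{\phi_m})})$, with $\phi_c:X_c\to X$ the structure morphism. The relative log canonical ring appearing here is a finitely generated sheaf of $\Oc_X$-algebras by the same finite-generation results, so $X_c$ is well defined; $K_{X_c}+\tilde D+E_{\phi_c}$ is $\phi_c$-ample by construction, and $(X_c,\tilde D+E_{\phi_c})$ is log canonical because $X_c$ is the relative canonical (ample) model of the log canonical pair $(Y,B)$. Uniqueness is then formal: the relative log canonical ring depends only on $(X,D)$ and not on the model used to compute it, and $\phi_c$-ampleness forces any log canonical model of $(X,D)$ to coincide with its relative $\Proj$; hence two log canonical models admit mutually inverse morphisms over $X$ and are equal. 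This is the content of \cite[Theorem 1.1, Proposition 2.1]{OX12}; see also \cite[Theorems 1.26, 1.32, 1.34]{Kol13}.

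Finally, to obtain the morphism $X_m\to X_c$, note that $D_m:=K_{X_m}+\tilde D+E_{\phi_m}$ is $\phi_m$-nef and, since $\phi_m$ is birational, automatically $\phi_m$-big. The relative base-point-free theorem therefore makes $D_m$ $\phi_m$-semiample, and the associated relative contraction over $X$ is precisely $\Proj_X\bigoplus_m(\phi_m)_*\Oc_{X_m}(\floor{mD_m})=X_c$; this is the desired morphism. As indicated, the sole deep ingredient is the termination/finite-generation statement for the relative MMP of a log smooth pair with standard coefficients over an arbitrary normal variety, supplied by \cite{HX13} and the literature it builds on; granting this, everything else is formal, via the negativity lemma and the base-point-free theorem.
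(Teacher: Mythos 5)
The paper gives no proof of this theorem: it is stated purely as a quotation of \cite[Theorem 1.1, Proposition 2.1]{OX12}, \cite[Theorems 1.26, 1.32, 1.34]{Kol13} and \cite[Lemma 2.4]{HX13}, so there is no internal argument to compare yours against, and your outline is a reasonable account of how those cited results combine (run the relative MMP on a log resolution to produce a dlt model, compare any two minimal models via the negativity lemma, take the relative $\Proj$ of the log canonical algebra for $X_c$, and contract $X_m\to X_c$ via semiampleness). Two caveats. First, the final step is not an application of the classical relative base-point-free theorem: that theorem requires a klt pair (or that $aL-(K_{X_m}+\Delta_m)$ be nef \emph{and big}, and $(a-1)L$ need not be big), whereas here the pair is only dlt/lc; the semiampleness of the $\phi_m$-nef divisor $K_{X_m}+\tilde D+E_{\phi_m}$ is itself one of the deep inputs (existence of \emph{good} minimal models for lc pairs with relatively big boundary, \cite[Lemma 2.8]{OX12}, \cite[Lemma 2.4]{HX13}), which is exactly how the paper invokes it later in the proof of Lemma \ref{lem:contraction_in_lcmodel}. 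So the ``sole deep ingredient'' is not only termination. Second, your claim that the MMP output satisfies ``the discrepancy condition of Definition \ref{def:dltmodel}'' as literally stated is problematic: by the monotonicity of (log) discrepancies along a $(K+\Delta)$-negative contraction (\cite[Lemma 3.38]{KM98}, which the paper itself uses), one gets $a(E,Y,\tilde D+E_\pi)<a(E,X_m,\tilde D+E_{\phi_m})$ for $\phi$-exceptional $E$, i.e.\ the opposite inequality to the one printed in that definition; you should either flag this as a sign error in the definition or justify the inequality you actually obtain.
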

%\begin{rmk}\todo{check! reference?}
%If $(X,D)$ is a pair as before, $Y$ a log resolution, $X_m$ a dlt model and $X_c$ the log canonical model, then there exists a rational map $Y\dasharrow X_m$, and morphisms $Y\to X_c$ and $X_m\to X_c$.
%\end{rmk}
%\textcolor{red}{PROBLEEM! dlt modellen blijken niet isomorf in codimensie 1 (enkel crepant birationaal), dus we kunnen de gecontracteerde divisoren niet zomaar karakteriseren. We kunnen ook iets log terminaal nemen, maar we willen wel nog het bestaan hebben natuurlijk. GEEN PROBLEEM! In mijn situatie blijkt het allemaal chill te zijn en hetzelfde te zijn als wat beschreven wordt in \cite{HX13}.}
%\todo{comment maybe also on log abundance in keel matsuki mckernan}

%\begin{thm}
%\textcolor{red}{Kollar, Mori, Thm 3.52 about uniqueness of these.}
%\end{thm}
%\textcolor{red}{In what follows, we assume that $X$ is affine. The definitions and results can be stated in a more general setting, but this would lead us too far. \todo{Reference for more general definitions?}}
\begin{defn}[see for example {\cite{LazI04}}]
%Let $\pi:Y\to X$ be a morphism of algebraic varieties, then a linear series on $Y$ over $X$ is a coherent subsheaf $\mathcal V\subseteq\pi_*\Oc_Y(L)$, where $L$ is an integral divisor on $Y$, with a morphism $\pi^*\mathcal V\to\Oc_Y(L)$. The complete linear series associated to $L$ is $\pi_*\Oc_Y(L)$ with the morphism $\pi^*\pi_*\Oc_Y(L)\to\Oc_Y(L)$. We will denote this by $|L/X|$, abusing notations.
%
%If $E\subset Y$ is a subscheme of $Y$, and $i:E\to Y$ is the inclusion, than the restricted linear series over $X$ associated to a divisor $L$ is the image of the map $\pi_*\Oc_Y(L)\to\pi_*i_*i^*\Oc_Y(L)$. By abuse of notations, we will denote this by $|L/X|_E$.
Let $\pi:Y\to X$ be a morphism of varieties, with $X$ affine. If $L$ is a Cartier divisor on $Y$, $\Oc_Y(L)$ is the associated invertible sheaf, and $V\subseteq H^0(Y, \Oc_Y(L))$ is a linear subspace, then $|V|=\mathbb P(V)$ is a \emph{linear series} on $Y$ over $X$. If $V=H^0(Y,\Oc_Y(L))$, then we say $|V|$ is a \emph{complete linear series} over $X$ associated to $L$, also denoted $|L|$.

Let $|V|$ be a linear series with $V\subseteq H^0(Y,\Oc_Y(L))$, $E\subset Y$ a subvariety of $Y$, and $i:E\to Y$ the inclusion. The \emph{restricted linear series} over $X$ associated to $V$, denoted $|V|_E$, is $\mathbb P(i^*(V))$, where $i^*$ denotes the morphism $H^0(Y,\Oc_Y(L))\to H^0(E,i^*\Oc_Y(L))$.
\end{defn}
\begin{rmk}
The general definition of a linear series over $X$ is a subsheaf of $\pi_*\Oc_Y(L)$ (see for example \cite[Generalization 9.1.17]{LazII04}). However, on affine schemes, quasi-coherent sheaves are determined by their global sections. Therefore, this definition coincides with the classical definition of a linear series, not considered relative to $X$. We restrict to the affine case here, which is sufficient for us. The definition of the base locus below is also the same as the classical definition if $X$ is affine. However, the general definition does depend on $\pi$.
\end{rmk}
\begin{defn}
%If $\pi:Y\to X$ is a morphism of normal varieties, with $X$ affine, and $|V|$ is a linear series on $Y$ over $X$, where $V\subseteq H^0(Y,\Oc_Y(L))$, then the base locus of $|V/X|$ over $X$, denoted by $B(|V|)$ is the closed set cut out by the image of
%\[\pi^*\mathcal V\otimes_{\Oc_Y}\Oc_Y(-L)\to\Oc_Y.\]
%The stable base locus of $D$ over $X$ is defined as\todo{define this for $m\mathcal V$}
%\[{\bf B}(D/X) = \bigcap_{m\in\Z_{>0}} B(mD/X).\]
If $\pi:Y\to X$ is a morphism of normal varieties, with $X$ affine, and $|V|$ is a linear series on $Y$ over $X$, where $V\subseteq H^0(Y,\Oc_Y(L))$, then the base locus of $|V|$ over $X$ is 
\[B(|V|) = \bigcap_{s\in V} \Supp(\text{div}(s)),\]
where $\text{div}(s)$ denotes the divisor of zeroes of $s$.
This equals the closed set cut out by the image of
\[V\otimes_{\Oc_Y}\Oc_Y(-L)\to\Oc_Y.\]
If $L$ is a Cartier divisor on $Y$, then the stable base locus of $|L|$ over $X$ is defined as
\[{\bf B}(L) = \bigcap_{k\in\Z_{>0}} B(|kL|).\]
If $D$ is a $\Q$-divisor on $Y$, then we define ${\bf B}(D)={\bf B}(nD)$, where $n\in\mathbb Z_{>0}$ such that $nD$ is integral. This is independent of the choice of $n$ since ${\bf B}(L)={\bf B}(nL)$ for every Cartier divisor $L$ and every $n\in\Z_{>0}$ (see for example \cite[Proposition 2.1.21]{LazI04}).
\end{defn}
\begin{defn}
%If $\mathcal V$ is a linear system on $Y$ over $X$, then we say $\mathcal V$ is \emph{big over $X$} if \[\limsup\frac{\text{rank}m\mathcal V}{m^d}>0,\]\todo{what is $m\mathcal V$?}
%where $d$ is the dimension of a general fibre of $\pi$\todo{restrict to image!}.\todo{Laz, 1.1.23 for def of rank}\todo{or Lazarsfeld's definition: restriction to general fibre is big, but then we need to define big, also a definition in BCHM}
%
%Bigness of linear system, also relative. (also restricted linear system?)
Let $\pi:Y\to X$ be a morphism of algebraic varieties, with $X$ affine. We say that a complete linear series $|L|$ on $Y$ over $X$ is \emph{big over $X$} if \[\limsup_{k\to\infty}\frac{h^0(F,\Oc_Y(kL)|_F)}{k^{\dim F}}>0,\]
where $F$ is a general non-empty fibre of $\pi$.%\todo{or Lazarsfeld's definition: restriction to general fibre is big, but then we need to define big, also a definition in BCHM}

If $E\subset Y$ is a subvariety of $Y$, and $i:E\to Y$ is the embedding, then the restricted linear series $|L|_E$ is \emph{big} over $X$ if 
\[\limsup_{k\to\infty}\frac{\dim im(H^0(Y,\Oc_Y(kL))\to H^0(F,\Oc_Y(kL)|_F))}{k^{\dim F}}>0,\] where $F$ is a general non-empty fibre of the induced morphism $E\to X$.%$d$ is the dimension of a general non-empty fibre of $\pi\circ i$.\todo{adapt this part to something with restriction of fibre (not evident, check what happens in the proof)}

If $D$ is a $\Q$-divisor on $Y$, then we say $|D|$, resp.\ $|D|_E$, is big over $X$ if so is $|nD|$, resp. $|nD|_E$, where $nD$ is an integral multiple of $D$.
\end{defn}

Given a log resolution $\pi:Y\to X$ of $(X,D)$, the following lemma tells us which exceptional divisors are contracted in the log canonical model. To the best of our knowledge, this result does not appear explicitly in the literature. We think it is of independent interest. A more general statement over a quasi-projective $X$ should hold, but that would lead us beyond the terminology and notation of the present paper.
Both the statement and the outline of the proof were kindly pointed out to us by Christopher Hacon.

\begin{lem}\label{lem:contraction_in_lcmodel}%\todo{in willekeurige dimensie: from \cite[Lemma 2.8 and below]{OX12}, aanpassen! (it is ok, this good minimal model is still a minimal dlt model wrt $\pi$). I think it is ok now.}
Let $X$ be a normal affine variety and $D=\sum a_i D_i$ a $\Q$-divisor on $X$, with $0<a_i\leq 1$ for all $i$, such that $K_X+D$ is $\Q$-Cartier. Let $\pi:Y\to X$ be a log resolution of $(X,D)$.
Consider a minimal dlt model $X_m$ of $(X,D)$ with respect to $\pi$, and the log canonical model $X_c$ of $(X,D)$, in a diagram
\begin{center}\begin{tikzcd}
Y \arrow[dashed]{r}{\phi}[swap]{}\arrow{rd}{}[swap]{\pi}
&X_m \arrow{r}{\psi}[swap]{}\arrow{d}{\pi_m}[swap]{}
&X_c \arrow{ld}{\pi_c}[swap]{}\\
&X.
\end{tikzcd}\end{center}
Let $\Delta$ be the divisor $\Delta=\tilde D+\sum E_i$ on $Y$, where the $E_i$ are the irreducible exceptional divisors of $\pi$.

Then the divisors contracted by $\phi$ are precisely the divisors $E$ contained in ${\bf B}(K_Y+\Delta)$, %for which $|K_Y+\Delta|_E=\emptyset$ (i.e., those contained in ${\bf B}(K_Y+\Delta)$), 
and the divisors contracted by $\psi\circ\phi$ are the divisors $E$ such that the restricted linear series $|K_Y+\Delta|_E$ is not big over $X$.

%\textcolor{red}{\todo{Adapt formulation!}Let $\pi:Y\to X$ be a log resolution of $(X,D)$, and $E$ an exceptional prime divisor on $Y$. Denote $\Delta=\tilde D + E_\pi$, where $E_\pi$ is the reduced exceptional divisor of $\pi$. Then $E$ is contracted in the log minimal (dlt?) model of $(Y,\Delta)$ over $(X,D)$ if and only if $E\subseteq \textbf{Bs}(K_Y+\Delta)$, and it is contracted in the log canonical model of $(Y,\Delta)$ over $(X,D)$ if and only if $|K_Y+\Delta|_E$ is not big over $X$.}\todo{formuleer vanuit $X$? `log canonical model of $X,D$'?}
\end{lem}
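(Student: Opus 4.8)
The plan is to reduce both assertions to the behaviour of the linear systems $|m(K_Y+\Delta)|$, transported to the log canonical model. Set $L:=K_Y+\Delta$, and abbreviate by $\Delta_m$ (resp.\ $\Delta_c$) the boundary $\tilde D+E_{\phi_m}$ on $X_m$ (resp.\ $\tilde D+E_{\phi_c}$ on $X_c$). Since $X$ is affine, ``over $X$'' is vacuous for $\mathbf B(L)$ and for bigness of $|L|$, while for a restricted series $|L|_E$ with $E$ a prime divisor on $Y$ it amounts to the condition that $\dim\operatorname{im}\big(H^0(Y,\Oc_Y(mL))\to H^0(F,\Oc_Y(mL)|_F)\big)$ grows like $m^{\dim F}$ for $F$ a general fibre of $E\to\pi(E)$. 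We may also use that $R(Y,L)=\bigoplus_{m\ge0}H^0(Y,\Oc_Y(mL))$ is a finitely generated (log canonical) ring, so that $\mathbf B(L)=B(|m_0L|)$ for a suitable divisible $m_0$.

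First I would pass to a common log resolution $p\colon W\to Y$ of the map $\phi$, carrying morphisms $q\colon W\to X_m$ and $g:=\psi\circ q\colon W\to X_c$, and record the discrepancy identities
\[p^*L=q^*(K_{X_m}+\Delta_m)+F_m=g^*(K_{X_c}+\Delta_c)+F_c,\]
where the coefficient of $F_m$ (resp.\ $F_c$) along a prime divisor $G$ on $W$ equals $a(G,X_m,\Delta_m)-a(G,Y,\Delta)$ (resp.\ $a(G,X_c,\Delta_c)-a(G,Y,\Delta)$). These coefficients are $\ge0$, since log discrepancies do not decrease on passing from $Y$ to $X_m$ or to $X_c$, and they vanish along those divisors whose boundary coefficient is preserved, i.e.\ along the divisors not contracted by the respective map; hence $F_m$ is $q$-exceptional and $F_c$ is $g$-exceptional. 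Here I would invoke the birational-geometry input — the content pointed out by Hacon — that the contraction $\psi$ from the minimal dlt model to the log canonical model is crepant, $K_{X_m}+\Delta_m=\psi^*(K_{X_c}+\Delta_c)$, so that $a(G,X_c,\Delta_c)=a(G,X_m,\Delta_m)$ for every $G$. Since $K_{X_c}+\Delta_c$ is $\pi_c$-ample and $X$ is affine, $|m(K_{X_c}+\Delta_c)|$ is base-point free and defines a closed immersion of $X_c$ for $m$ divisible, exhibiting $X_c=\Proj_XR(Y,L)$; combining this with the projection formula ($q_*\Oc_W(mF_m)=\Oc_{X_m}$, $g_*\Oc_W(mF_c)=\Oc_{X_c}$, $p_*\Oc_W=\Oc_Y$) yields, for all $m$,
\[|mp^*L|=g^*|m(K_{X_c}+\Delta_c)|+mF_c,\qquad \mathbf B(p^*L)=\Supp(F_c)=p^{-1}\bigl(\mathbf B(L)\bigr),\]
and shows that the rational map defined by $|mL|$ ($m$ divisible) coincides with $\psi\circ\phi$ and is resolved on $W$ by $g$.

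For the first statement, let $E$ be a prime divisor on $Y$ with strict transform $E_W$ on $W$. The identities above give $E\subseteq\mathbf B(L)$ if and only if $E_W\subseteq\Supp(F_c)$, that is $a(E,X_c,\Delta_c)>a(E,Y,\Delta)$. If $\phi$ does not contract $E$, then its strict transform on $X_m$ carries the same boundary coefficient, so $a(E,X_m,\Delta_m)=a(E,Y,\Delta)$, and crepancy of $\psi$ gives $a(E,X_c,\Delta_c)=a(E,X_m,\Delta_m)$; hence $E\not\subseteq\mathbf B(L)$. If $\phi$ does contract $E$, the defining inequality of a minimal dlt model with respect to $\pi$ gives $a(E,X_m,\Delta_m)>a(E,Y,\Delta)$, while $a(E,X_c,\Delta_c)\ge a(E,X_m,\Delta_m)$; hence $E\subseteq\mathbf B(L)$. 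This is precisely the first claim.

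For the second statement, keep $E$ and $E_W$, and let $F$ be a general fibre of $E\to\pi(E)$. Since $\psi\circ\phi$ is the map defined by $|mL|$ and is resolved by $g$, the divisor $E$ is contracted by $\psi\circ\phi$ exactly when $g(E_W)$ is not a divisor on $X_c$, equivalently when $\dim g(F)<\dim F$. On the other hand, by the equality $|mp^*L|=g^*|m(K_{X_c}+\Delta_c)|+mF_c$ the moving part of the restricted series $|mp^*L|_{E_W}$ is the pullback under $g|_{E_W}$ of the very ample system $|m(K_{X_c}+\Delta_c)|$; restricting further to $F$, one finds that $\dim\operatorname{im}\big(H^0(Y,\Oc_Y(mL))\to H^0(F,\Oc_Y(mL)|_F)\big)$ grows like $m^{\dim g(F)}$. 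Therefore $|L|_E$ is big over $X$ if and only if $\dim g(F)=\dim F$, if and only if $g(E_W)$ is a divisor, if and only if $\psi\circ\phi$ does not contract $E$. The step I expect to be the real obstacle is the birational-geometry input of the second paragraph — the effectivity and $g$-exceptionality of $F_c$, and especially the crepancy $K_{X_m}+\Delta_m=\psi^*(K_{X_c}+\Delta_c)$; once these are in place, everything else is formal manipulation of linear systems on $W$ together with the elementary fact that a $\pi_c$-ample divisor has a base-point-free multiple over the affine base.
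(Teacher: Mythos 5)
Your proposal is correct in substance, and for the first assertion it takes a genuinely different route from the paper. The paper obtains the first part by arranging (via the proof of \cite[Theorem 1.1]{OX12}) that $K_{X_m}+\phi_*\Delta$ is $\pi_m$-semiample and then citing \cite[Lemma 2.4]{HX13} as a black box; you instead reprove that lemma in this setting, by passing to a common resolution $W$, writing $p^*(K_Y+\Delta)=g^*(K_{X_c}+\Delta_c)+F_c$ with $F_c\geq 0$ and $g$-exceptional, identifying ${\bf B}(K_Y+\Delta)$ with $p(\Supp F_c)$ via base-point-freeness of the relatively ample model over the affine base, and reading off membership of $E$ from the discrepancy comparison between $Y$, $X_m$ and $X_c$. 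This is more self-contained, at the cost of importing the same deep input in a different guise: you need the crepancy $K_{X_m}+\Delta_m=\psi^*(K_{X_c}+\Delta_c)$ (equivalently, $\pi_m$-semiampleness), which is precisely the fact the paper quotes from \cite{KK10} and \cite{Bir12} in Lemma \ref{lem:contraction_in_lcmodel_curves}, plus the negativity-lemma fact that log discrepancies do not decrease from $Y$ to $X_m$ (\cite[Lemma 3.38]{KM98}). For the second assertion your argument is essentially the paper's — the restricted series becomes, up to a fixed exceptional part, the pullback of a relatively very ample system from $X_c$, and bigness is decided by comparing $\dim g(F)$ with $\dim F$ — just carried out on $W$ rather than on $X_m$, where the paper runs its diagram chase with Serre vanishing and injectivity of $p^*$.

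Two small points to tighten. First, when $E_W\subseteq\Supp F_c$ the restricted series $|mp^*L|_{E_W}$ is identically zero, not ``a fixed part plus the pullback of an ample system,'' so the growth rate is $0$ rather than $m^{\dim g(F)}$; you should treat this case separately (it is exactly the case already disposed of by your first part, and the paper does make this case distinction explicitly). Second, you invoke $a(E,X_m,\Delta_m)>a(E,Y,\Delta)$ for $\phi$-contracted $E$; this is the standard defining inequality of a minimal model (log discrepancies strictly increase under the MMP) and is what your argument requires, but be aware that Definition \ref{def:dltmodel} as printed states the reverse inequality, which is vacuous for lc pairs — the convention you use is surely the intended one.
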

\begin{rmk}
The condition that $E$ is contained in ${\bf B}(K_Y+\Delta)$ is equivalent to $|k(K_Y+\Delta)|_E=\emptyset$ for all sufficiently divisible $k\in\Z_{>0}$. Indeed, both statements are equivalent to the vanishing of all sections of the $H^0(Y,k(K_Y+\Delta))$ on $E$.
\end{rmk}
\begin{proof}
First note that by the proof of Theorem 1.1 in \cite{OX12}, and in particular Lemma 2.8, we can assume that $(X_m,\phi_*\Delta)$ is a good minimal model over $(X,D)$, meaning that $K_{X_m}+\phi_*\Delta$ is $\pi_m$-semiample. Then the first part of the statement follows from \cite[Lemma 2.4]{HX13}. Indeed, $(Y,\Delta)$ is log canonical because it is a log resolution of $(X,D)$, hence it is dlt.

%The first part follows from \cite[Lemma 2.4]{HX13}. Indeed, $(Y,\Delta)$ is log canonical because it is a log resolution of $(X,D)$, hence it is dlt, and by the relative log abundance theorem for threefolds (see for example \cite[Theorem A.2]{Fuj00}), we know that $K_{X_m}+\phi_*\Delta$ is $\pi_m$-semiample.

Note that if $E\subseteq {\bf B}(K_Y+\Delta)$, then $|K_Y+\Delta|_E $ cannot be big over $X$. Indeed, if $F$ is a fibre of $E\to X$ and $k\in\Z_{>0}$ is sufficiently divisible, then every section of $k(K_Y+\Delta)$ vanishes on $E$ and hence on $F$, so the image of \[H^0(Y,\Oc_Y(k(K_Y+\Delta)))\to H^0(F,\Oc_Y(k(K_Y+\Delta))|_F)\] is always zero.

Now let $E$ be a divisor that is not contracted by $\phi$, and denote $\phi_*E=E'\subset X_m$. Since by definition $\pi_*(k(K_Y+\Delta))=\pi_{m*}(k(K_{X_m}+\phi_*\Delta))$ for all sufficiently divisible $k\in\Z_{>0}$, we know that $|K_Y+\Delta|_E$ is big over $X$ if and only if so is $|K_{X_m}+\phi_*\Delta|_{E'}$.% \todo{check this statement! we also want that sections on $F$ and $F'$ coincide}.

Since $K_{X_m}+\phi_*\Delta$ is $\pi_m$-semiample, it follows that $K_{X_m}+\phi_*\Delta = \psi^*A$ for some $\pi_c$-ample $\Q$-divisor $A$ on $X_c$. Indeed, some multiple $k(K_{X_m}+\phi_*\Delta)$ is integral and base-point free, so it is the pullback of the very ample sheaf $\Oc(1)$ in $\Proj_X\bigoplus_{n\geq 0} \pi_{m*}\Oc_{X_m}(nk(K_{X_m}+\phi_*\Delta))$, which is precisely $X_c$ (see for example \cite[Theorem 3.52(1)]{KM98}). %Then it follows that $E'$ is $\psi$-exceptional if and only if $|\psi^*A|_{E'}$ is not big over $X$.\todo{explain more?}

Let $F$ be a general non-empty fibre of the morphism $E'\to X$. We have the following diagram:

\begin{center}
\begin{tikzcd}
&F \arrow[two heads]{r}{p}\arrow[hook]{d}{i}
&\psi F\arrow[hook]{d}{j}\\
&X_m \arrow[two heads]{r}{\psi}
&X_c.
\end{tikzcd}
\end{center}

This induces, for any sufficiently divisible $k\in\Z_{>0}$, the following diagram:

%\begin{tikzcd}
%&H^0(X_m,m\psi^*A) \arrow[leftarrow]{r}{\psi^*}\arrow{d}{i^*}
%&H^0(X_c,mA)\arrow{d}{j^*}\\
%&H^0(E,mi^*\psi^*A) \arrow[leftarrow]{r}{p^*}
%&H^0(\psi E,mj^*A)
%\end{tikzcd}
%
%If $F$ is a general fiber of $\pi_c$, such that $F\cap \psi E$ is non-empty, then we get the following diagram.
%
%\begin{tikzcd}
%&H^0(\psi^{-1}F,k\psi^*A) \arrow[leftarrow]{r}{\psi^*}\arrow{d}{i^*}
%&H^0(F,kA)\arrow{d}{j^*}\\
%&H^0(\psi^{-1}F\cap E,ki^*\psi^*A) \arrow[leftarrow]{r}{p^*}
%&H^0(F\cap \psi E,kj^*A)
%\end{tikzcd}

\begin{center}
\begin{tikzcd}
&H^0(X_m,k\psi^*A) \arrow[leftarrow]{r}{\psi^*}\arrow{d}{i^*}
&H^0(X_c,kA)\arrow{d}{j^*}\\
&H^0(F,ki^*\psi^*A) \arrow[leftarrow]{r}{p^*}
&H^0(\psi F,kj^*A).
\end{tikzcd}
\end{center}

Note that $j^*$ is surjective for $k\gg 0$ by Serre vanishing, since the restriction of $A$ to $\psi F$ is ample. %\todo{here a problem, since F is a fibre intersected with E, not just a fibre. OK because $A$ has $>0$ intersection with all curves on $F$.}
Also, $\psi^*$ is an isomorphism because $\psi_*\Oc_{X_m} = \Oc_{X_c}$, using the projection formula. Finally, since $p$ is surjective (and in particular dominant), $p^*$ is injective.

The statement of $|\psi^*A|_{E'}$ being big over $X$ is equivalent to \[\limsup_{k\to \infty}\frac{\dim i^*(H^0(X_m,k\psi^*A))}{k^{\dim F}}>0.\]  If $E'$ is contracted by $\psi$, i.e., $\dim (\psi F)<\dim F$, we have $im(i^*) = im(i^*\circ\psi^*) = im(p^*\circ j^*)\subseteq im(p^*)$. This implies that the dimension of $im(i^*)$ is at most $h^0(\psi F,kj^*A)$, which can grow with $k$ only as $k^{\dim (\psi F)}$. Therefore $|\psi^*A|_{E'}$ is not big over $X$.

Otherwise, if $E'$ is not contracted by $\psi$, then $\dim F = \dim (\psi F)$. Since $A$ is ample, we can take $n$ big enough such that $\dim H^0(\psi F,kj^*nA)\sim k^{\dim (\psi F)}=k^{\dim F}$. Then, because $p^*$ is injective, it follows that $|\psi^*A|_ {E'}$ is big over $X$.
\end{proof}
The following result should be well-known. We include the proof for completeness.
\begin{lem}\label{lem:contraction_in_lcmodel_curves}
Let $(X,D)$ be a normal variety and $D=\sum a_iD_i$ a $\Q$-divisor on $X$, with $0<a_i\leq 1$ for all i. Let $\pi:Y\to X$ be a log resolution of $(X,D)$ and $E$ a $\pi$-exceptional prime divisor on $Y$. Suppose that some Zariski open of $E$ is covered by curves $C$ whose classes belong to a fixed ray in the numerical cone of curves, and \[(K_Y+\Delta)\cdot C<0, \text{ (resp. } (K_Y+\Delta)\cdot C\leq 0\text),\]
where $\Delta=\tilde D+E_\pi$. Then $E$ is contracted in a dlt model with respect to $\pi$ (resp.\ the log canonical model) of $(X,D)$.
\end{lem}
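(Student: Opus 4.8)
The plan is to deduce this from Lemma \ref{lem:contraction_in_lcmodel} together with the Cone Theorem and the characterization of contractions of extremal rays. First I would observe that the numerical hypothesis $(K_Y+\Delta)\cdot C < 0$ (resp.\ $\le 0$) for curves $C$ sweeping out a Zariski-dense open subset of $E$ and spanning a fixed ray $R$ forces $E$ to lie in the stable base locus $\mathbf{B}(K_Y+\Delta)$ (resp.\ to fail bigness of $|K_Y+\Delta|_E$), which is exactly what the previous lemma requires. Indeed, if $D'$ is any effective $\Q$-divisor numerically equivalent (over $X$) to a positive multiple of $K_Y+\Delta$, then $D'\cdot C < 0$ (resp.\ $\le 0$) for the covering curves $C$; a divisor with negative intersection against a covering family of curves of $E$ must contain $E$ in its support, so $E\subseteq \mathbf{B}(K_Y+\Delta)$ and Lemma \ref{lem:contraction_in_lcmodel} gives that $E$ is contracted by $\phi$, i.e.\ in the minimal dlt model with respect to $\pi$.

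For the strict-inequality case the above already suffices. For the non-strict case, where we only get $(K_Y+\Delta)\cdot C \le 0$, I would argue that $|K_Y+\Delta|_E$ cannot be big over $X$: if it were, then for $k \gg 0$ sufficiently divisible the image of $H^0(Y,\Oc_Y(k(K_Y+\Delta)))$ in $H^0(F,\cdot)$ on a general fibre $F$ of $E\to X$ would define a linear system on $E$ of maximal Iitaka dimension, hence its members could not all be trivial on the covering curves $C\subset E$; but a $\Q$-divisor $D'\equiv_X k(K_Y+\Delta)$ that is effective and whose support does not contain $E$ would satisfy $D'\cdot C \ge 0$ with equality forcing $C \not\subseteq \Supp D'$, and a covering family of curves with $D'\cdot C = 0$ confined to $E$ contradicts bigness of the restricted system. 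So $|K_Y+\Delta|_E$ is not big over $X$, and the second clause of Lemma \ref{lem:contraction_in_lcmodel} shows $E$ is contracted by $\psi\circ\phi$, i.e.\ in the log canonical model.

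The main obstacle, and the step requiring the most care, is the passage from the numerical condition on the sweeping curves to the membership $E\subseteq\mathbf{B}(K_Y+\Delta)$ (resp.\ non-bigness). One has to be careful about working over the affine base $X$: the relevant divisors live over $X$, bigness is relative, and "numerically equivalent over $X$" must be handled correctly. The clean way is probably to note that for $k$ sufficiently divisible, any section $s \in H^0(Y,\Oc_Y(k(K_Y+\Delta)))$ has divisor $\operatorname{div}(s)$ which is effective and linearly (hence numerically, over $X$) equivalent to $k(K_Y+\Delta)$; restricting to a general covering curve $C$ gives $\operatorname{div}(s)\cdot C = k(K_Y+\Delta)\cdot C \le 0$, and since $C$ varies in a covering family of $E$ this is impossible unless $E\subseteq\Supp\operatorname{div}(s)$, so $s$ vanishes on $E$. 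In the strict case this kills $E$ entirely ($E \subseteq \mathbf{B}$); in the non-strict case one instead extracts that the image of the sections in $H^0(F,\cdot)$ has bounded dimension, so the restricted series is not big. I would also remark that the covering family spanning a single ray is not strictly needed for this argument but matches the way the lemma will be applied (via the Cone Theorem, where such a ray gives an extremal contraction).
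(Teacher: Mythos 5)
Your proof is correct in substance, but it takes a genuinely different route from the paper. The paper does not pass through Lemma \ref{lem:contraction_in_lcmodel} at all: it runs the minimal model program from $(Y,\Delta)$, observes (via $K_{X_m}+\phi_*\Delta=\psi^*(K_{X_c}+\psi_*\phi_*\Delta)$ with the right-hand class ample over $X$) that a curve on $X_m$ is contracted by $\psi$ exactly when $(K_{X_m}+\phi_*\Delta)\cdot C=0$, and then checks that after each flip or divisorial contraction either $E$ dies or its transform is still covered by curves of non-positive (resp.\ negative) intersection, the key input being the monotonicity of discrepancies under MMP steps \cite[Lemma 3.38]{KM98}. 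Your reduction to Lemma \ref{lem:contraction_in_lcmodel} is cleaner in that it avoids tracking the covering family through the birational modifications: the strict case is immediate ($\operatorname{div}(s)\cdot C<0$ for a family sweeping out a dense subset of $E$ forces $E\subseteq\Supp\operatorname{div}(s)$ for every section $s$, hence $E\subseteq{\bf B}(K_Y+\Delta)$), and in the non-strict case bigness of the restricted series would force $(K_Y+\Delta)|_F$ to be a big class on a general fibre $F$ of $E\to\pi(E)$, which by Kodaira's lemma ($m(K_Y+\Delta)|_F\sim_\Q A+N$ with $A$ ample, $N$ effective) gives positive intersection with a general member of the covering family, a contradiction. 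Two small points to tighten: first, Lemma \ref{lem:contraction_in_lcmodel} is stated only for $X$ affine while the present lemma is not, so you should note that contraction of $E$ in the dlt and log canonical models is local over $X$ and may be checked after restricting to an affine open meeting $\pi(E)$ (this is essentially why the paper's self-contained MMP argument does not need the affine hypothesis); second, your phrase ``with equality forcing $C\not\subseteq\Supp D'$'' has the implication backwards --- the correct statement is that $C\not\subseteq\Supp D'$ gives $D'\cdot C\geq0$, whence equality and then $C\cap\Supp D'=\emptyset$ --- but the Kodaira-decomposition argument you sketch afterwards is the right way to close the non-strict case, so this is a matter of wording rather than a gap. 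Your closing remark is also accurate: the single-ray hypothesis is not needed for your argument, whereas it is more natural in the paper's proof, where one fixed numerical class is followed through the MMP.
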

\begin{proof}
%It is known \todo{check! because this morphism is crepant ($K_m+D_m = p^*(K_c+D_c)$), find reference! Corollary 1.4.3 in BCHM? Remark 2.7 in Birkar, Existence of log canonical flips...?} that the morphism $X_m\to X_c$ contracts precisely the curves $C$ such that $(K_{X_m}+\Delta_m)\cdot C=0$. 

Let $\phi:Y\dasharrow X_m$ be a dlt model with respect to $\pi$, and $\psi:X_m\to X_c$ the morphism onto the log canonical model. By for example \cite[1.9]{KK10} or \cite[Remark 2.7]{Bir12}, we know that $\psi^*(K_{X_c}+\psi_*\phi_*\Delta)=K_{X_m}+\phi_*\Delta$. Hence if $C$ is a curve on $X_m$, we have \[(K_{X_c}+\psi_*\phi_*\Delta)\cdot \psi_*C=\psi^*(K_{X_c}+\psi_*\phi_*\Delta)\cdot C=(K_{X_m}+\phi_*\Delta)\cdot C.\] Therefore, since $K_{X_c}+\psi_*\phi_*\Delta$ is ample over $X$, $C$ is contracted by $\psi$ if and only if $(K_{X_m}+\phi_*\Delta)\cdot C=0$.

So by running the minimal model program, we only have to check that after a flip or a divisorial contraction $f:Y'\dasharrow Y''$, either $E$ is contracted, or the transform of $E$ is still covered by $(K_{Y''}+f_*\Delta)$-negative curves. Indeed, by \cite[Lemma 3.38]{KM98}, the discrepancies of exceptional divisors over $X$ do not decrease after such a map, and hence the intersection with a movable curve cannot increase.
%This follows from the fact that the discrepancies do not decrease after such a morphism (see \cite[Lemma 3.38]{KM98}). Hence the intersection with a curve cannot increase.
\end{proof}

\section{Preliminary results}\label{sec:prelim}
\subsection{The two-dimensional case}

%When $X$ is a surface, contribution of jumping numbers by an exceptional divisor is determined by the intersections with other components of the total transform in the minimal embedded resolution. This was proved by Smith and Thompson.
%\begin{thm}[{\cite[Theorem 3.1]{ST07}}]\label{thm:ST07}
%If $C$ is a curve on a smooth surface $X$, $\pi:Y\to X$ the minimal embedded resolution of $(X,C)$, then a $\pi$-exceptional prime divisor $E$ on $Y$ contributes a jumping number if and only if \[E\cdot E^\circ\geq 3,\]
%where $E^\circ$ is the reduced divisor $(\pi^*C)_{red}-E$.
%Moreover, in this case, $E$ contributes the jumping number $1-\frac1a$, where $a=\mult_E(\pi^*C)$.
%\end{thm}
%
%It is well known (see \cite[Proposition 2.5]{Vey97}) that the exceptional divisors satisfying the condition in the previous theorem are precisely the exceptional curves that are not contracted in the log canonical model. So Theorem \ref{thm:ST07} becomes
%

When $C$ is a curve on a smooth surface $X$, and $E$ is an exceptional prime divisor in its minimal embedded resolution, then contribution of jumping numbers by $E$ was studied by Smith and Thompson in \cite{ST07}, and more generally by Tucker in \cite{Tuc10}. We have the following result.

\begin{thm}\label{thm:ST07}
%If $C$ is a curve on a smooth surface $X$, $\pi:Y\to X$ the minimal embedded resolution of $(X,C)$, then a $\pi$-exceptional prime divisor $E$ on $Y$ contributes a jumping number if and only if it is not contracted in the log canonical model of $(X,C)$. Moreover, this condition can be seen from the intersection configuration in the minimal embedded resolution, and we always know a jumping number contributed by this divisor.

Let $C$ be a curve on a smooth surface $X$, and $\pi:Y\to X$ the minimal embedded resolution of $(X,C)$. Let $E$ be an exceptional prime divisor of $\pi$, and set $d=E\cdot E^\circ$, where $E^\circ=(\pi^*C)_{red}-E$. Then the following are equivalent:
\begin{enumerate}
\item $E$ contributes jumping numbers to the pair $(X,C)$,\label{pn-1,1}
\item $E$ is not contracted in the log canonical model of $(X,C_{red})$,\label{pn-1,2}
\item $d\geq 3$.\label{pn-1,3}
\end{enumerate}
Moreover, in this case, $E$ contributes the jumping number $\lambda=1-\frac1a$, where $a$ is the multiplicity of $E$ in $\pi^*C$.
\end{thm}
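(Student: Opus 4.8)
The plan is to prove the three equivalences by establishing a cycle of implications, and then to handle the numerical statement about $1-1/a$ separately. For the geometric core, I would work on the minimal embedded resolution $\pi:Y\to X$, write $\pi^*C=\sum a_iE_i$ and $K_\pi=\sum k_iE_i$ with $E$ one of the exceptional $E_i$, say $E=E_1$, and record the genus/self-intersection data: since $Y\to X$ is a composition of blow-ups of smooth surface points, every exceptional prime divisor is a smooth rational curve, and minimality of the resolution means no $(-1)$-curve among the $E_i$ is contractible without destroying the normal crossings property. The quantity $d=E\cdot E^\circ$ counts the number of points (with multiplicity) where $E$ meets the rest of the total transform.

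\textbf{The equivalence (\ref{pn-1,2}) $\Leftrightarrow$ (\ref{pn-1,3}).} This I would get from Lemma~\ref{lem:contraction_in_lcmodel_curves} together with the adjunction computation on $E\cong\PP^1$. With $\Delta=\tilde C+\sum_i E_i$ (so $\Delta$ is the reduced total transform), adjunction on $E$ gives $(K_Y+E)\cdot E=\deg K_E=-2$, hence $(K_Y+\Delta)\cdot E=-2+(\Delta-E)\cdot E=-2+d$. So $(K_Y+\Delta)\cdot E<0$ iff $d\le 1$, $=0$ iff $d=2$, and $>0$ iff $d\ge 3$. When $d\le 1$, $E$ is covered by a negative ray (itself) with $(K_Y+\Delta)\cdot E<0$, so by Lemma~\ref{lem:contraction_in_lcmodel_curves} it is contracted already in a dlt model, a fortiori in the log canonical model; when $d=2$, $(K_Y+\Delta)\cdot E=0$ gives contraction in the log canonical model by the same lemma. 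Conversely, when $d\ge 3$ one must show $E$ survives: here $K_Y+\Delta$ is nef on $E$ of positive degree, and using that in the two-dimensional log resolution the relevant MMP steps are classically understood (every extremal contraction contracting $E$ would force $d\le 2$, contradicting that discrepancies do not decrease along the MMP as in \cite[Lemma 3.38]{KM98}), so $E$ maps to a divisor in $X_c$. This last direction is where I expect to spend real care; a clean way is to note that on the surface $Y$ the log canonical model is reached by contracting exactly the curves with $(K_Y+\Delta)$-degree $\le 0$ in each step, and to check that the "$d\ge 3$" condition is stable.

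\textbf{The equivalence with (\ref{pn-1,1}).} I would use the contribution criterion directly: $E$ contributes $\lambda$ iff $\J(X,\lambda C)\subsetneq \pi_*\Oc_Y(K_\pi-\floor{\lambda\pi^*C}+E)$, and by Local Vanishing (Theorem~\ref{thm:local_vanishing}) applied to both sheaves on $Y$, this can be translated into the non-surjectivity of a restriction map $H^0(Y,\Oc_Y(K_\pi-\floor{\lambda\pi^*C}+E))\to H^0(E,\cdot)$, i.e.\ into the failure of a cohomology vanishing $H^1(Y,\Oc_Y(K_\pi-\floor{\lambda\pi^*C}))\to H^1$ of the twist. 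Choosing $\lambda=1-1/a$ (the candidate for $E$ coming from the identity $k_1+1=a_1(1-1/a_1)+\cdots$, valid because for the last-blown-up-type divisor $k_1+1 = a_1$ need not hold in general — so instead I verify directly that $\lambda=1-1/a$ is a candidate jumping number for $E$, i.e.\ $\lambda a_i\in\Z$ whenever $E_i$ meets $E$, which is a standard numerical fact about minimal resolutions), one computes the divisor $K_\pi-\floor{\lambda\pi^*C}+E$ restricted to $E\cong\PP^1$ and finds its degree; the point is that for $d\ge 3$ this degree is $\le -2$ so that $H^1(\PP^1,\Oc(-2-\epsilon))\ne 0$ obstructs surjectivity, giving contribution, whereas for $d\le 2$ one shows the obstruction vanishes for every candidate $\lambda$ for $E$, so $E$ contributes nothing. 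The main obstacle here is the bookkeeping identifying the restricted degree $(K_\pi-\floor{\lambda\pi^*C}+E)\cdot E$ with something controlled by $d$; this requires the numerical relations among the $a_i$, $k_i$ on a minimal resolution (essentially $K_\pi\cdot E = -2 - E\cdot E$ and $\pi^*C\cdot E=0$) and a careful round-down analysis, which is exactly the content carried over from \cite{ST07} and \cite{Tuc10}.

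\textbf{The ``moreover'' clause.} Once (\ref{pn-1,1})$\Leftrightarrow$(\ref{pn-1,3}) is in place, I would show that the specific value contributed is $1-1/a$ by two observations: first, $1-1/a$ \emph{is} a candidate jumping number for $E$ (the numerical check above), and second, at $\lambda=1-1/a$ the restricted sheaf on $E\cong\PP^1$ has degree exactly $d-2\ge 1>0$ while $H^1$ of the kernel sheaf is forced to contribute, so the inclusion defining contribution is strict precisely at this value. That $1-1/a$ is actually a jumping number of $(X,C)$ (not merely that $E$ contributes it) then follows since contribution implies it is a jumping number. I expect the numerical verification that $\lambda=1-1/a$ is the relevant candidate — and that no smaller candidate for $E$ is contributed when $d\ge 3$ — to be the fiddliest part, relying on the structure of the intersection matrix of a minimal embedded resolution of a plane-curve-type singularity.
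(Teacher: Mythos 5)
The paper does not actually prove Theorem \ref{thm:ST07}: it cites \cite{ST07} for \ref{pn-1,1} $\Leftrightarrow$ \ref{pn-1,3}, \cite{Vey97} for \ref{pn-1,2} $\Leftrightarrow$ \ref{pn-1,3}, and Corollary \ref{cor:contrib=>no_contraction} for \ref{pn-1,1} $\Rightarrow$ \ref{pn-1,2}. Your strategy is essentially the $n=2$ specialization of the paper's own proof of Proposition \ref{prop:pn-1} (via Proposition \ref{prop:contribution_iff_globalsections} and Lemma \ref{lem:contraction_in_lcmodel_curves}), so the route is sound in outline, but two points need repair.

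First, your numerology for \ref{pn-1,3} $\Rightarrow$ \ref{pn-1,1} is off: the degree of $K_E-\floor{\lambda\pi^*C}|_E$ at $\lambda=1-\tfrac1a$ is not ``exactly $d-2$.'' Writing $\deg(-\floor{\lambda\pi^*C}|_E)=\sum_{E_i\neq E}\{\lambda a_i\}(E_i\cdot E)$, each summand lies in $[0,1)$, so the naive bound only gives degree $\geq -2$, which proves nothing; taken as an equality your formula would even force contribution when $d=2$. What is actually needed is the relation $\sum_j d_ja_j=(1+\sum_j d_jm_j)a$ from \cite{Vey91} (used in the proof of Proposition \ref{prop:pn-1}), which yields $\deg(K_E-\floor{\lambda\pi^*C}|_E)\geq d-3$; you defer exactly this to ``the content carried over from \cite{ST07},'' so the crucial inequality is asserted rather than proved. (By contrast, your converse direction is fine: for any candidate $\lambda$ of $E$ the same fractional-part computation gives degree $<d-2\le 0$ when $d\le 2$, since $E\cong\PP^1$.) Second, the direction \ref{pn-1,3} $\Rightarrow$ \ref{pn-1,2} --- survival of $E$ in the log canonical model when $d\geq 3$ --- is the part you admit needs ``real care,'' and your stability claim is genuinely delicate: a neighbour $E'$ with $E'\cdot(E')^\circ=1$ does get contracted, and one must argue that this cannot cascade to kill $E$. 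This whole direction is unnecessary: once you have \ref{pn-1,3} $\Rightarrow$ \ref{pn-1,1} and \ref{pn-1,1} $\Rightarrow$ \ref{pn-1,2} (the latter from Theorem \ref{thm:ST} / Corollary \ref{cor:contrib=>no_contraction}, which you never invoke), together with your correct proof that $d\leq 2$ forces contraction (Lemma \ref{lem:contraction_in_lcmodel_curves} plus $(K_Y+\Delta)\cdot E=d-2$), the cycle of implications closes. I would restructure along those lines rather than attempt the MMP argument. Finally, the worry about whether $1-\tfrac1a$ is a candidate for $E$ is moot: $E$ is prime, so one only needs $(1-\tfrac1a)a=a-1\in\Z$.
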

The equivalence \ref{pn-1,1} $\Leftrightarrow$ \ref{pn-1,3} is the main result of \cite{ST07} (Theorem 3.1). The equivalence \ref{pn-1,2} $\Leftrightarrow$ \ref{pn-1,3} is well known (see for example \cite[Proposition 2.5]{Vey97}).
The implication \ref{pn-1,1} $\Rightarrow$ \ref{pn-1,2} holds in arbitrary dimension by Corollary \ref{cor:contrib=>no_contraction} below.

In the rest of the paper, we study to what extent the other equivalences can be generalized. We divide this problem in three questions.

Let $E$ be an exceptional prime divisor in a log resolution $\pi:Y\to X$ of an effective divisor $D$ on a smooth variety $X$. Write $\pi^*D=aE+\sum a_i E_i$, where the $E_i$ are the irreducible components of $\pi^{-1}(D)$ different from $E$.

%This raises three questions.
%\begin{enumerate}[label=\textbf{Question \arabic*.},ref=\arabic*]
%\item Is the condition of contributing jumping numbers equivalent to not being contracted in the log canonical model?\label{q:contr=lcmodel}
%\item Can contribution of jumping numbers be decided from the intersection configuration in a `min\todo{what is minimal?}imal' resolution?\label{q:cont<=intersections}
%\item If an exceptional divisor contributes jumping numbers, does it always contribute the number $1-1/a$?\label{q:same_jn}
%\end{enumerate}

\begin{q}
Does $E$ contribute jumping numbers if and only if it is not contracted in the log canonical model of $(X,D_{red})$?\label{q:cont=lcmodel}
\end{q}
\begin{q}
Can we draw conclusions about contribution by only looking at the intersection configuration on $E$ with other components of $\pi^*D$, i.e., is contribution determined by the class of $((\pi^*D)_{red}-E)|_E$ in $\Pic E$?
\label{q:cont<=intersections}
\end{q}
\begin{q}
If $E$ contributes jumping numbers, does it always contribute the number $1-1/a$?\label{q:same_jn}
\end{q}

The answer to Question \ref{q:same_jn} is negative, as can be seen from the following example.

\begin{example}\label{ex:not_1-1/a}
Let $D$ be the divisor given by $y(yz^2-x^2z+x^3+y^3)^2=0$ in $X=\mathbb A^3$. Blowing up at the origin first, with exceptional divisor $E_1$, followed by two line blow-ups, yields a resolution $\pi:Y\to X$, with $$K_\pi=2E_1+E_2+2E_3\text{, and }$$ $$\pi^*D = \tilde D + 7E_1+3E_2+6E_3,$$ where $\tilde D = 2D_1+D_2$ for prime divisors $D_1$ and $D_2$. One sees immediately that $\frac37$ is the log canonical threshold, so it is a jumping number contributed by $E_1$. However, $\frac67$ is not a jumping number by the following argument. The exceptional divisor $E_1$ is a projective plane, blown up at two infinitely near points. The Picard group is generated by the class of the pullback of a line in $\PP^2$, say $\ell$, the pullback of the first exceptional divisor, say $e_1$, and the second exceptional divisor, say $e_2$. Then we have $K_{E_1} = -3\ell+e_1+e_2$, $E_1|_{E_1} = -\ell$, $E_2|_{E_1} = e_1-e_2$, $E_3|_{E_1}=e_2$, $D_1|_{E_1}=3\ell-e_1-e_2$ and $D_2|_{E_1} = \ell-e_1-e_2$. So $K_{E_1}-\floor{\frac67\pi^*D}|_{E_1} =-e_2$, which is a class not containing an effective divisor. Hence, by Proposition \ref{prop:contribution_iff_globalsections} below, $\frac67$ is not a jumping number contributed by $E_1$. Since $E_1$ is the only divisor for which $\frac67$ is a candidate jumping number, we can even conclude that $\frac67$ is not a jumping number.

Using for example the algorithm of \cite{BD16}, we find that the complete list of jumping numbers in $(0,1]$ is $\frac37$, $\frac12$, $\frac56$ and $1$, which also yields the result.% We see that $E_1$ contributes $\frac37$ as a jumping number, but $\frac67$ is not a jumping number.
\end{example}

\subsection{Preliminary results in arbitrary dimension}
Now we state some results that will be useful to prove statements about contribution of jumping numbers. An important tool is the following proposition, which appears for the two-dimensional case in \cite{ST07}, and in the general case, with a similar proof, in {\cite[Proposition 2.12]{BD16}}. We add the proof for completeness.
\begin{prop}\label{prop:contribution_iff_globalsections}
Let $D$ be an effective divisor on a smooth variety $X$, and let $E$ be an exceptional divisor in a log resolution $\pi:Y\to X$ of $(X,D)$. Denote by $i:E\to Y$ the embedding. Let $\lambda\in\Q_{>0}$ be a candidate jumping number for $E$. Then $E$ contributes $\lambda$ as a jumping number if and only if
\[\pi_*i_*i^*\Oc_Y(K_\pi-\floor{\lambda\pi^*D}+E)\neq 0.\]
If $\pi(E)$ is affine (for example when $E$ contracts to a point), this is equivalent to
\[H^0(E,i^*\Oc_Y(K_\pi-\floor{\lambda\pi^*D}+E))\neq 0.\]
If $E$ is prime, this means that $K_E-\floor{\lambda\pi^*D}|_E$ is equivalent to an effective divisor on $E$.
\end{prop}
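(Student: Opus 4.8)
The plan is to unwind the definition of contribution via a short exact sequence, then apply Local Vanishing to kill the obstruction to taking global sections. Write $L = K_\pi - \floor{\lambda\pi^*D}$, so that the condition ``$E$ contributes $\lambda$'' reads $\J(X,\lambda D) = \pi_*\Oc_Y(L) \subsetneq \pi_*\Oc_Y(L+E)$. The natural thing is to compare the two by the inclusion $\Oc_Y(L) \hookrightarrow \Oc_Y(L+E)$, whose cokernel is $i_*i^*\Oc_Y(L+E)$ (the twist of the structure sheaf of $E$ by the line bundle $\Oc_Y(L+E)|_E$). Pushing forward the short exact sequence
\[
0 \to \Oc_Y(L) \to \Oc_Y(L+E) \to i_*i^*\Oc_Y(L+E) \to 0
\]
along $\pi$ gives a left-exact sequence
\[
0 \to \pi_*\Oc_Y(L) \to \pi_*\Oc_Y(L+E) \to \pi_*i_*i^*\Oc_Y(L+E) \to R^1\pi_*\Oc_Y(L).
\]

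First I would observe that $R^1\pi_*\Oc_Y(L) = R^1\pi_*\Oc_Y(K_\pi - \floor{\lambda\pi^*D}) = 0$ by Local Vanishing (Theorem \ref{thm:local_vanishing}), applied to the divisor $\lambda D$ on $X$ (note $\floor{\lambda\pi^*D} = \floor{\pi^*(\lambda D)}$, and $\pi$ is a log resolution of $(X,\lambda D)$ as well). Hence the connecting map vanishes and the sequence above is right-exact too, so
\[
\pi_*\Oc_Y(L+E)\big/\pi_*\Oc_Y(L) \;\cong\; \pi_*i_*i^*\Oc_Y(L+E).
\]
Therefore the strict inclusion $\pi_*\Oc_Y(L)\subsetneq\pi_*\Oc_Y(L+E)$ holds if and only if $\pi_*i_*i^*\Oc_Y(L+E) \neq 0$, which is the first claimed equivalence. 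For the second: if $\pi(E)$ is affine, then $\pi_*i_*i^*\Oc_Y(L+E)$ is a quasi-coherent sheaf on the affine scheme $\pi(E)$ (or we may work on an affine open $U\subseteq X$ containing $\pi(E)$), so it is zero iff its module of global sections is zero, and $\Gamma\big(\pi_*i_*i^*\Oc_Y(L+E)\big) = H^0(E, i^*\Oc_Y(L+E))$ by definition of pushforward. Finally, when $E$ is prime, adjunction gives $i^*\Oc_Y(L+E) = \Oc_E(K_\pi|_E - \floor{\lambda\pi^*D}|_E + E|_E)$, and since $K_Y|_E + E|_E = K_E$ (adjunction on the smooth divisor $E\subset Y$) while $\pi^*K_X|_E$ contributes nothing to the part of $K_\pi$ supported away from $E$... more carefully, $K_\pi|_E + E|_E = (K_Y + E)|_E - \pi^*K_X|_E = K_E$ because $\pi^*K_X|_E$ is trivial as $\pi$ contracts $E$ — wait, that needs $\pi(E)$ a point; in general one simply notes $K_\pi|_E + E|_E = K_E$ is exactly the statement of adjunction for $K_\pi = K_Y - \pi^*K_X$ restricted suitably, which the authors take as known. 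So $i^*\Oc_Y(L+E) = \Oc_E(K_E - \floor{\lambda\pi^*D}|_E)$, and having a nonzero section is the same as this class being effective.

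The main obstacle is purely bookkeeping rather than conceptual: getting the adjunction identity $K_\pi|_E + E|_E = K_E$ stated cleanly in the generality where $\pi(E)$ need not be a point (so that $\pi^*K_X|_E$ is genuinely present), and making sure the affineness reduction is phrased correctly — one wants $\pi_*i_*i^*\Oc_Y(L+E)$ to be a sheaf on an affine base so that vanishing of the sheaf is detected by vanishing of global sections. Everything else is the standard ``short exact sequence plus Local Vanishing'' argument, and the real content is that $R^1\pi_*$ of the relevant sheaf vanishes, which is handed to us.
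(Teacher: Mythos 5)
Your argument is essentially identical to the paper's proof: the same short exact sequence comparing $\Oc_Y(K_\pi-\floor{\lambda\pi^*D})$ with its twist by $E$, pushed forward with the $R^1\pi_*$ term killed by Local Vanishing, followed by the adjunction identification $(K_\pi+E)|_E=K_E$ for $E$ prime. The adjunction bookkeeping you worry about (the class $\pi^*K_X|_E$) is treated just as briskly in the paper, and is harmless in the intended applications where $E$ is contracted to a point, so there is no gap.
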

\begin{proof}
Let $\lambda$ be a candidate jumping number for $E$ and consider the exact sequence
\begin{align*}
0\to \Oc_Y(K_\pi-\floor{\lambda\pi^*D})\to \Oc_Y&(K_\pi-\floor{\lambda\pi^*D}+E)\\&\to i_*i^*\Oc_Y(K_\pi-\floor{\lambda\pi^*D}+E)\to 0
\end{align*}
of sheaves on $Y$. Pushing forward through $\pi$, we obtain
\begin{align*}
0\to \J(X,\lambda D) \to \pi_*\Oc_Y(K_\pi&-\floor{\lambda \pi^*D}+E)\\ &\to \pi_*i_*i^*\Oc_Y(K_\pi-\floor{\lambda\pi^*D}+E)\to 0,
\end{align*}
where the last term is 0 by local vanishing (Theorem \ref{thm:local_vanishing}). So we see that $\lambda$ is a jumping number contributed by $E$ if and only if $\pi_*i_*i^*\Oc_Y(K_\pi-\floor{\lambda\pi^*D}+E)\neq0$. If $E$ is prime, we have $(K_\pi+E)|_E=K_E$ by adjunction, so the statement follows.
\end{proof}

As a consequence of this proposition, we have the following necessary condition for contributing jumping numbers.
\begin{cor}\label{cor:necessary_condition}
In the same setting as Proposition \ref{prop:contribution_iff_globalsections}, suppose $E$ is a prime divisor which is contracted to a point, and suppose that a divisor on $E$ is effective if and only if it is effective as a $\Q$-divisor. If $E$ contributes some jumping number $\lambda$ to $(X,D)$, then $K_E+E^\circ|_E$ is effective and non-zero in $\Pic E$, where $E^\circ=(\pi^*D)_{red}-E$.
\end{cor}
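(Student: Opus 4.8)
The plan is to deduce this directly from Proposition \ref{prop:contribution_iff_globalsections} by controlling the round-down $\floor{\lambda\pi^*D}|_E$ from above. Since $E$ is prime and contracted to a point, $\pi(E)$ is affine, so Proposition \ref{prop:contribution_iff_globalsections} tells us that if $E$ contributes some jumping number $\lambda$, then $K_E - \floor{\lambda\pi^*D}|_E$ is equivalent to an effective divisor on $E$. The first step is to write $\lambda\pi^*D = \lambda a E + \sum_i \lambda a_i E_i$ where $E^\circ = (\pi^*D)_{red} - E = \sum_i E_i$. Because $\lambda$ is a candidate jumping number for $E$, we have $\lambda a \in \Z$, so $\floor{\lambda\pi^*D} = \lambda a E + \sum_i \floor{\lambda a_i} E_i$; restricting to $E$ and isolating the part supported on $E^\circ$, we get $\floor{\lambda\pi^*D}|_E = \lambda a\, E|_E + \sum_i \floor{\lambda a_i} E_i|_E$. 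Here I should be slightly careful about the term $\lambda a\, E|_E$: since $E$ appears in $K_\pi$ as well and we used adjunction $(K_\pi + E)|_E = K_E$ inside Proposition \ref{prop:contribution_iff_globalsections}, the effective divisor furnished there is $K_E - \floor{\lambda\pi^*D}|_E$ with the self-intersection bookkeeping already folded into $K_E$; I would just quote that statement as a black box rather than re-derive it.

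The key estimate is then $\floor{\lambda a_i} \le 1$ for every $i$. This follows because $1-1/a \le \lambda < 1$ forces each $\lambda a_i < a_i \le 1$ (using that $(X,D_{red})$, being resolved by $\pi$, is log canonical so each coefficient of the total transform behaves suitably — more concretely, $\lambda a_i \le a_i$ and we only need $\lambda a_i < 2$, which is automatic once $\lambda \le 1$ and $a_i < 2$); hence $\floor{\lambda a_i} \in \{0, 1\}$ and in particular $\floor{\lambda a_i} E_i|_E \le E_i|_E$ as $\Q$-divisors on $E$. Summing, $\floor{\lambda\pi^*D}|_E \le \lambda a\, E|_E + E^\circ|_E$ as $\Q$-divisors, so
\[
K_E + E^\circ|_E - \lambda a\, E|_E \ \geq\ K_E - \floor{\lambda\pi^*D}|_E,
\]
which is effective by Proposition \ref{prop:contribution_iff_globalsections}. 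Absorbing $-\lambda a\, E|_E$ correctly (it is the term already accounted for in the adjunction step, so it does not actually appear) yields that $K_E + E^\circ|_E$ dominates an effective $\Q$-divisor, hence is effective as a $\Q$-divisor, hence — by the hypothesis that effectivity over $\Q$ implies effectivity over $\Z$ on $E$ — is effective in $\Pic E$.

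For non-vanishing: I must rule out $K_E + E^\circ|_E \equiv 0$. If it were numerically (or linearly) trivial, then since $\floor{\lambda\pi^*D}|_E$ differs from a sub-divisor of $\lambda a\, E|_E + E^\circ|_E$ by something effective and non-trivial whenever some $\floor{\lambda a_i} = 0$ with $E_i|_E$ not numerically trivial — one shows the class $K_E - \floor{\lambda\pi^*D}|_E$ would then be a non-trivial negative-of-effective class, contradicting that it is effective. More carefully, the argument is: the difference $\bigl(K_E + E^\circ|_E\bigr) - \bigl(K_E - \floor{\lambda\pi^*D}|_E\bigr) = E^\circ|_E + \floor{\lambda\pi^*D}|_E - (\text{the }\lambda a E|_E\text{ term})$ is a non-zero effective divisor (it contains, e.g., those $E_i|_E$ with $\floor{\lambda a_i}=0$, and in a log resolution there is always such a component meeting $E$), so if $K_E+E^\circ|_E$ were trivial, $K_E - \floor{\lambda\pi^*D}|_E$ would be the negative of a non-zero effective divisor, which cannot be effective.

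The main obstacle I anticipate is the precise handling of the coefficient of $E$ itself and the adjunction bookkeeping — making sure the $\lambda a\, E|_E$ contribution is correctly identified with what the adjunction formula $(K_\pi + E)|_E = K_E$ already incorporates, so that the final statement is genuinely about $K_E + E^\circ|_E$ and not $K_E + E^\circ|_E - \lambda a\, E|_E$. The cleanest route is to unwind Proposition \ref{prop:contribution_iff_globalsections} at the level of $\Oc_Y(K_\pi - \floor{\lambda\pi^*D} + E)$ \emph{before} applying adjunction, restrict the line bundle $K_\pi + E - \floor{\lambda\pi^*D}$ to $E$, and only then use $(K_\pi+E)|_E = K_E$ together with $\floor{\lambda\pi^*D}|_E \le \lambda a E|_E + E^\circ|_E$; but since $\lambda a$ is an integer and $E$ is $\pi$-exceptional, the term $\lambda a\, E|_E$ is a genuine integral divisor class on $E$ that must be tracked. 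I expect the correct final inequality to be that $K_E - \floor{\lambda\pi^*D}|_E$ effective together with $\floor{\lambda\pi^*D}|_E - E^\circ|_E \le \lambda a\, E|_E$ does not immediately give effectivity of $K_E + E^\circ|_E$ unless one also knows $\lambda a\, E|_E$ is anti-effective or similar; this is exactly where the hypothesis that $E$ is contracted to a point is used, via the negativity of $E|_E$ for a divisor contracted to a point (Grauert-type negativity), which I would invoke to conclude $\lambda a\, E|_E$ adds nothing obstructive. Reconciling these pieces is the delicate part; everything else is the routine round-down estimate above.
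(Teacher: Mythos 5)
Your overall strategy --- quoting Proposition \ref{prop:contribution_iff_globalsections} to get effectivity of $K_E-\floor{\lambda\pi^*D}|_E$ and then showing that the difference $(K_E+E^\circ|_E)-(K_E-\floor{\lambda\pi^*D}|_E)=E^\circ|_E+\floor{\lambda\pi^*D}|_E$ is a non-zero effective class --- is the right one, and is what the paper does. But the execution has a genuine gap. Your ``key estimate'' $\floor{\lambda a_i}\le 1$ is false: the $a_i$ are the multiplicities of the components $E_i$ in $\pi^*D$ (not coefficients of a boundary divisor bounded by $1$), so they are typically integers much larger than $1$; moreover $\lambda$ is an arbitrary contributed jumping number, not necessarily of the form $1-1/a$ nor even $<1$. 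Your fallback --- that the leftover term $\lambda a\,E|_E$ ``adds nothing obstructive'' because $E|_E$ is negative --- points in the wrong direction: for the difference to be effective you need $\lambda a\,E|_E+\sum_i(1+\floor{\lambda a_i})E_i|_E$ to be effective, and negativity of $E|_E$ makes the first summand an obstruction, not a help.

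The missing idea is to use the hypothesis that $E$ is contracted to a point in the form $\pi^*D|_E=0$ in $\Pic E\otimes\Q$ (the restriction to $E$ of the pullback of a line bundle from $X$ is pulled back from the point $\pi(E)$, hence trivial). Writing $\floor{\lambda\pi^*D}=\lambda\pi^*D-\sum_i\{\lambda a_i\}E_i$ (the term in $E$ has no fractional part since $\lambda$ is a candidate for $E$), one gets
\[
E^\circ|_E+\floor{\lambda\pi^*D}|_E=\sum_i\bigl(1-\{\lambda a_i\}\bigr)E_i|_E
\]
in $\Pic E\otimes\Q$, with all coefficients in $(0,1]$. This is a non-zero effective $\Q$-divisor (some $E_i$ meets $E$), hence by your hypothesis an effective non-zero integral class, and adding it to the effective class $K_E-\floor{\lambda\pi^*D}|_E$ finishes the proof. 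No bound on $\floor{\lambda a_i}$ and no sign information on $E|_E$ is needed; the cancellation of the $\lambda a\,E|_E$ term is precisely the triviality of $\pi^*D|_E$. Relatedly, your non-vanishing argument should not rely on finding an $E_i$ with $\floor{\lambda a_i}=0$ (there may be none); with the correct identity above, every $E_i$ meeting $E$ contributes a strictly positive coefficient.
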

\begin{proof}
Denote $\pi^*D = \sum_{i\in I}a_iE_i + aE$, where $E$ and the $E_i$ are different prime divisors. If $\lambda$ is a candidate jumping number for $E$, then $\floor{\lambda\pi^*D} = \lambda\pi^*D-\sum_{i\in I} \{\lambda a_i\}E_i$. Hence, $\left(E^\circ +\floor{\lambda\pi^*D}\right)|_E = \sum_{i\in I}(1-\{\lambda a_i\})E_i|_E$ since $\pi^*D|_E=0$, and this is an effective $\Q$-divisor on $E$, different from the zero divisor, and hence an effective integral divisor.

If $E$ contributes $\lambda$ as a jumping number, then $K_E-\floor{\lambda\pi^*D}|_E$ is effective in $\Pic E$. Adding $E^\circ|_E+\floor{\lambda\pi^*D}|_E$ yields the result.
\end{proof}

The following theorem states that multiplier ideals can actually be computed using log canonical models instead of log resolutions. It is a special case of a theorem by Smith and Tucker, who have been so kind to provide the statement and the proof in the appendix to this paper (see Theorem \ref{LC-res}).
\begin{thm}\label{thm:ST}
Let $X$ be a smooth variety and $D$ an effective divisor on $X$. If $\phi_c:X_c\to X$ is the log canonical model of $(X,D_{red})$, and $\lambda\in\Q_{>0}$, then \[\J(X,\lambda D) = \phi_{c*}\Oc_{X_c}(K_{\phi_c}-\floor{\lambda\phi_c^*D}).\]
\end{thm}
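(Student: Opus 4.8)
This statement is the special case $X$ smooth of the more general Theorem~\ref{LC-res} proved in the appendix, and the cleanest route is to prove that general result; the shape of the argument is the same, so I describe it in the present setting. The plan is to compare the asserted right-hand side with the resolution-side formula $\J(X,\lambda D)=\pi_*\Oc_Y(K_\pi-\floor{\lambda\pi^*D})$ by choosing the log resolution to factor through $X_c$. Since $X$ is smooth, composing any log resolution $g\colon Y\to X_c$ of $(X_c,\Gamma_c)$ --- where $\Gamma_c$ denotes the reduced divisor made up of the strict transforms of the components of $D$ together with all $\phi_c$-exceptional prime divisors --- with $\phi_c$ yields a log resolution $\pi=\phi_c\of g$ of $(X,D)$. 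As the multiplier ideal does not depend on the resolution and $\pi_*=\phi_{c*}\of g_*$, it is enough to establish the local identity of reflexive sheaves on $X_c$
\[
g_*\Oc_Y\bigl(K_\pi-\floor{\lambda\pi^*D}\bigr)\;=\;\Oc_{X_c}\bigl(K_{\phi_c}-\floor{\lambda\phi_c^*D}\bigr).
\]

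The inclusion ``$\subseteq$'' is immediate. From $K_\pi=K_g+g^*K_{\phi_c}$ and $\pi^*D=g^*\phi_c^*D$, together with the fact that $K_g$ is $g$-exceptional, the coefficient of $K_\pi-\floor{\lambda\pi^*D}$ along the strict transform of any prime divisor $P$ on $X_c$ agrees with the coefficient of $K_{\phi_c}-\floor{\lambda\phi_c^*D}$ along $P$. Restricting to these strict transforms the inequalities that cut out a local section therefore gives $g_*\Oc_Y(K_\pi-\floor{\lambda\pi^*D})\subseteq\Oc_{X_c}(K_{\phi_c}-\floor{\lambda\phi_c^*D})$, and pushing forward by $\phi_{c*}$ yields $\J(X,\lambda D)\subseteq\phi_{c*}\Oc_{X_c}(K_{\phi_c}-\floor{\lambda\phi_c^*D})$.

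For the reverse inclusion I would take a local section $f$ of $\Oc_{X_c}(K_{\phi_c}-\floor{\lambda\phi_c^*D})$ and verify that $\text{div}_Y(f)+K_\pi-\floor{\lambda\pi^*D}$ is effective also along the $g$-exceptional divisors, i.e.\ that $f\in\J(X,\lambda D)$. Two properties of the log canonical model enter here. The first is that $(X_c,\Gamma_c)$ is log canonical (Definition~\ref{def:lcmodel}): since the fractional part $\{\lambda\phi_c^*D\}$ is supported on $\Gamma_c$ with all coefficients strictly less than $1$, and $\Gamma_c$ contains the entire $\phi_c$-exceptional locus, a divisor over $X_c$ of nonpositive log discrepancy with respect to $\{\lambda\phi_c^*D\}$ would have to be centered where $\phi_c$ is an isomorphism onto a smooth open subset of $X$, which is impossible; consequently $(X_c,\{\lambda\phi_c^*D\})$ is klt, and this bounds from below the orders of vanishing along the $g$-exceptional divisors. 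The second --- and this is where the \emph{log canonical} model is genuinely needed, rather than an arbitrary log canonical or dlt model --- is that $K_{X_c}+\Gamma_c$ is $\phi_c$-ample: a relative Kawamata--Viehweg-type vanishing then supplies the global sections that make the comparison exact (and, for $i>0$, yields $R^ig_*=0$, so the full direct image matches as well).

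I expect this reverse inclusion to be the main obstacle, for two compounding reasons: log canonicity alone does not close the estimate along the $g$-exceptional divisors --- it really has to be combined with the $\phi_c$-ampleness and the resulting vanishing --- and $X_c$ need not be $\Q$-Gorenstein, so the round-downs $\floor{\lambda\phi_c^*D}$ and $\floor{\lambda\pi^*D}$, living on a variety where Weil $\Q$-divisors cannot be freely pulled back, must be reconciled valuation by valuation. A natural way to tame the second difficulty is to carry out the comparison first over a $\Q$-factorial model, such as a minimal dlt model $X_m$ of $(X,D_{red})$ (Definition~\ref{def:dltmodel}), where every divisorial manipulation is harmless, and then to descend along the morphism $\psi\colon X_m\to X_c$ using that it is crepant, $\psi^*(K_{X_c}+\Gamma_c)=K_{X_m}+\Gamma_m$ (as in the proofs of Lemmas~\ref{lem:contraction_in_lcmodel} and~\ref{lem:contraction_in_lcmodel_curves}), and once more the $\phi_c$-ampleness of $K_{X_c}+\Gamma_c$.
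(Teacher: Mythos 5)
Your skeleton (factor a log resolution $\pi=\phi_c\circ g$ through $X_c$ and prove the two inclusions of pushforwards) matches the paper's, and your easy inclusion is fine. But the hard direction is where your proposal goes wrong, both in diagnosis and in mechanism. You assert that the $\phi_c$-ampleness of $K_{X_c}+\Gamma_c$ is ``genuinely needed'' and that ``log canonicity alone does not close the estimate along the $g$-exceptional divisors.'' The paper proves the opposite: Theorem~\ref{LC-res} establishes the formula for \emph{any} LC-resolution, i.e.\ any normal $X\to S$ with $K_X+\Delta$ $\Q$-Cartier and $(X,\Delta)$ log canonical (with $X\setminus\Delta$ canonical); ampleness is never used, and no vanishing theorem enters. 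Moreover, invoking a relative Kawamata--Viehweg vanishing cannot do the job you assign it: the reverse inclusion is a statement about $R^0g_*$ --- that a rational function whose divisor satisfies the order conditions on $X_c$ automatically satisfies them along the $g$-exceptional divisors --- and vanishing of higher direct images says nothing about that. Your attempt to capture the valuative input via ``$(X_c,\{\lambda\phi_c^*D\})$ is klt'' does not typecheck either, for the reason you yourself flag: only $K_{X_c}+\Gamma_c$ is known to be $\Q$-Cartier, so log discrepancies with respect to the boundary $\{\lambda\phi_c^*D\}$ alone are not defined.

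The missing idea is an $\epsilon$-perturbation that lets you keep the full boundary $\Gamma_c$ (so that $K_{X_c}+\Gamma_c$ stays $\Q$-Cartier and log canonicity can be applied) and still recover the round-down at the end. Concretely: given a local section $\varphi$ with $\mathrm{div}(\varphi)+K_{\phi_c}-\phi_c^*(\lambda D)+\{\lambda\phi_c^*D\}\geq 0$, choose $\epsilon>0$ small enough that $\Gamma_c-\epsilon\,\phi_c^*(\lambda D)\geq\{\lambda\phi_c^*D\}$. Adding this inequality gives $\mathrm{div}(\varphi)-\phi_c^*K_X+(K_{X_c}+\Gamma_c)-(1+\epsilon)\phi_c^*(\lambda D)\geq 0$, in which every term is $\Q$-Cartier and can be pulled back by $g$; the log canonicity inequality $K_Y+\Gamma_Y\geq g^*(K_{X_c}+\Gamma_c)$ then transfers effectivity up to $Y$, and the leftover term $\Gamma_Y-\epsilon\,\pi^*(\lambda D)-\{\lambda\pi^*D\}$ has all coefficients in $[0,1)$, so it disappears when one takes the integral part, yielding $\mathrm{div}(\varphi)+K_\pi-\floor{\lambda\pi^*D}\geq 0$. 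Your alternative of detouring through a dlt model and descending along $\psi$ does not avoid this issue and adds machinery that the direct valuative argument makes unnecessary.
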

\begin{cor}\label{cor:contrib=>no_contraction}
If an exceptional divisor contributes jumping numbers to the pair $(X,D)$, not all of its irreducible components can be contracted in the log canonical model.
\end{cor}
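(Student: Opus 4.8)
The plan is to prove the contrapositive: if $E$ is a reduced exceptional divisor (possibly reducible) in a log resolution $\pi\colon Y\to X$ of $(X,D)$ \emph{all} of whose irreducible components are contracted in the log canonical model $\phi_c\colon X_c\to X$ of $(X,D_{red})$, then $E$ contributes no jumping number. Since contribution is independent of the chosen log resolution, I may take $\pi$ to factor through $X_c$, say $\pi=\phi_c\of g$ with $g\colon Y\to X_c$ a proper birational morphism; the hypothesis then says precisely that $E$ is $g$-exceptional. Fix a candidate jumping number $\lambda$ for $E$. By definition, $E$ fails to contribute $\lambda$ exactly when $\J(X,\lambda D)=\pi_*\Oc_Y(K_\pi-\floor{\lambda\pi^*D}+E)$; since $E\geq 0$ the inclusion ``$\subseteq$'' is automatic, so it suffices to prove $\pi_*\Oc_Y(K_\pi-\floor{\lambda\pi^*D}+E)\subseteq\J(X,\lambda D)$.

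The essential input is Theorem \ref{thm:ST}, which rewrites the target as $\J(X,\lambda D)=\phi_{c*}\Oc_{X_c}(K_{\phi_c}-\floor{\lambda\phi_c^*D})$. As $\pi_*=\phi_{c*}\of g_*$, it is enough to prove the inclusion of subsheaves of the constant sheaf $\C(Y)=\C(X_c)$,
\[ g_*\Oc_Y\bigl(K_\pi-\floor{\lambda\pi^*D}+E\bigr)\ \subseteq\ \Oc_{X_c}\bigl(K_{\phi_c}-\floor{\lambda\phi_c^*D}\bigr), \]
and then push forward by $\phi_c$. Membership of a rational function $f$ in either sheaf over an open set can be tested coefficient by coefficient along prime divisors. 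So let $P$ be a prime divisor of $X_c$, let $\tilde P\subset Y$ be its strict transform, and let $v=\operatorname{ord}_P=\operatorname{ord}_{\tilde P}$ be the associated divisorial valuation. A section of the left-hand sheaf over an open set meeting $P$ satisfies $v(f)+v(K_\pi)-v(\floor{\lambda\pi^*D})+v(E)\geq 0$. Here $v(E)=0$, since $\tilde P$ is a strict transform, hence not $g$-exceptional, hence not a component of $E$; moreover $v(\floor{\lambda\pi^*D})=\floor{\lambda\,v(D)}=v(\floor{\lambda\phi_c^*D})$, because the coefficient of the pullback of the Cartier divisor $D$ along a divisor is the value of the corresponding valuation on $D$; and $v(K_\pi)=v(K_{\phi_c})$, because the coefficient of a relative canonical divisor along a divisor is its discrepancy over $X$, which depends only on the valuation. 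Substituting gives $v(f)+v(K_{\phi_c})-v(\floor{\lambda\phi_c^*D})\geq 0$, exactly the condition defining a section of the right-hand sheaf. This yields the claimed inclusion, hence equality in the displayed comparison, so $E$ does not contribute $\lambda$; as $\lambda$ was arbitrary, $E$ contributes nothing.

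The step I expect to require the most care is the valuative bookkeeping on $X_c$: one must fix compatible representatives of the canonical classes (for instance $K_{X_c}=g_*K_Y$, so that $K_{\phi_c}=g_*K_\pi$ is a well-defined effective Weil divisor even when $X_c$ is not $\Q$-Gorenstein) and then verify cleanly that $K_\pi$ and $\floor{\lambda\pi^*D}$ ``restrict'' to $K_{\phi_c}$ and $\floor{\lambda\phi_c^*D}$ along strict transforms, together with the routine existence of a log resolution of $(X,D)$ dominating $X_c$. The only genuinely non-formal ingredient, however, is Theorem \ref{thm:ST}: without knowing that multiplier ideals may be computed on the log canonical model, there would be no reason for a $g$-exceptional divisor to be negligible, since such a divisor may still have positive log discrepancy over $X$ — precisely the phenomenon examined later in the paper.
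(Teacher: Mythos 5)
Your proposal is correct and follows the route the paper intends: the corollary is stated as an immediate consequence of Theorem \ref{thm:ST}, and your argument — pass to a log resolution factoring through the log canonical model, note that the $g$-exceptional divisor $E$ drops out under $g_*$ by the standard valuation-by-valuation comparison $g_*\Oc_Y(G)\subseteq\Oc_{X_c}(g_*G)$, then apply Theorem \ref{thm:ST} — is exactly the deduction the paper leaves implicit, with the compatibility of canonical representatives handled correctly.
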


%%%%%%%%%%%%%%%%%%%%%%%% Till here %%%%%%%%%%%%%%%%%

\section{Positive answers in specific situations}\label{sec:positiveanswers}

The proof of Theorem \ref{thm:ST07} builds on the fact that in the resolution of a curve on a smooth surface, every exceptional divisor is isomorphic to $\PP^1$, and hence has Picard group isomorphic to $\Z$. In the higher dimensional case, exceptional divisors can be more complicated. Therefore, a straightforward generalization of the proof of Theorem \ref{thm:ST07} is very unlikely. However, if we assume the exceptional divisor to be isomorphic to a specific, not too complicated variety, we can recover similar results, and find positive answers to our questions.

In the proofs in this section, we will use the results from \cite{Vey91}. These results are stated and proved for divisors on affine space, but this is used only to ensure that the pullback of a divisor restricted to an exceptional divisor $E$ is trivial in $\Pic E$. Therefore, these results also hold for prinicipal divisors on smooth varieties, or when $E$ is contracted to a point, which will be the setting in our propositions.

\begin{rmk}\label{rmk:minimal_resolution_caution}
We have to be careful in generalizing the statement of Theorem \ref{thm:ST07}, since a minimal resolution does not exist in higher dimensions. Therefore, we will assume in all of our statements that the log resolution is obtained by blowing up at centers that are either contained in the singular locus of $D$, or in the intersection of several components of the total transform of $D$. This does not give any limitations, because every pair has such a resolution (see \cite{Hir64}).
\end{rmk}
%\textcolor{red}{So from now on, we fix \todo{don't! Always state the theorems so that they are readable on their own!}a divisor $D$ on a $n$-dimensional smooth variety $X$, and a log resolution $\pi:Y\to X$ as described in Remark \ref{rmk:minimal_resolution_caution}. We consider an exceptional divisor $E$ on $Y$, and denote by $E_i$, $i\in I$ the components of $\pi^{-1}D$ different from $E$.\todo{Different notation as before. Replace it by $I_E$ or something like that?}}

\subsection{Contribution by an exceptional divisor isomorphic to $\PP^{n-1}$}

%It is well known \todo{really?}that it is contracted in the log canonical model if and only if $d\leq n$. Indeed, if $C$ is a line on $E$ \todo{reverse implication? Follows from Smith and Tucker maybe?}, then $C\cdot (K_E+\sum_{i\in I} E_i|_E) = d-n$, so by Lemma \ref{lem:contraction_in_lcmodel_curves}, if $d\leq n$, $E$ is contracted in the log canonical model.

The following proposition is the direct generalization of Theorem \ref{thm:ST07} to arbitrary dimensions. It can also be seen as a very special case of Proposition \ref{prop:pn-1_with_centers} below.

\begin{prop}\label{prop:pn-1}
Let $D$ be an effective divisor on a smooth $n$-dimensional variety $X$, with $n\geq 2$, and $\pi:Y\to X$ a log resolution of $(X,D)$. Let $E$ be an exceptional divisor of $\pi$ isomorphic to $\PP^{n-1}$, and let $d$ be the total degree in $E$ of the intersections of $E$ with the other components of $\pi^{-1}(D)$. Then the following are equivalent:
\begin{enumerate}
\item $E$ contributes jumping numbers to the pair $(X,D)$,\label{pn-1,1}
\item $E$ is not contracted in the log canonical model of $(X,D_{red})$,\label{pn-1,2}
\item $d\geq n+1$.\label{pn-1,3}
\end{enumerate}
Moreover, in this case, $E$ contributes the jumping number $\lambda=1-\frac1a$, where $a$ is the multiplicity of $E$ in $\pi^*D$.
\end{prop}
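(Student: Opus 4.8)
The plan is to prove the chain of equivalences by going around a triangle, using the tools assembled in the preliminary sections. The statement \ref{pn-1,1} $\Rightarrow$ \ref{pn-1,2} is already available in arbitrary dimension as Corollary \ref{cor:contrib=>no_contraction} (since $E$ is prime here), so the real work is in \ref{pn-1,2} $\Rightarrow$ \ref{pn-1,3} and \ref{pn-1,3} $\Rightarrow$ \ref{pn-1,1}, together with the final ``moreover'' clause. Throughout I would exploit that $E\cong\PP^{n-1}$ has $\Pic E\cong\Z$, generated by a hyperplane class $H$, so every divisor class on $E$ is an integer multiple of $H$, and effectivity of a class is equivalent to that integer being nonnegative. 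By Remark \ref{rmk:minimal_resolution_caution} we may assume $\pi$ is obtained by admissible blow-ups, so that $\pi^*D|_E\equiv 0$ in $\Pic E$; writing $\pi^*D=aE+\sum_{i}a_iE_i$ and $E^\circ=(\pi^*D)_{red}-E=\sum_i E_i$, this gives $aE|_E\equiv -\sum a_i(E_i|_E)$, and $E^\circ|_E\equiv dH$ by definition of $d$. Adjunction gives $K_E=(K_Y+E)|_E$, and $K_{\PP^{n-1}}\equiv -nH$, so $(K_Y+E)|_E\equiv -nH$, hence $K_Y|_E\equiv -(n-\deg(E|_E))H$ in a form I can make precise once $E|_E$ is pinned down.

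For \ref{pn-1,3} $\Rightarrow$ \ref{pn-1,1}, I would take the natural candidate $\lambda=1-\tfrac1a$ and apply Proposition \ref{prop:contribution_iff_globalsections}: since $E$ is prime and contracted to a point (or at least $\pi(E)$ affine), $E$ contributes $\lambda$ iff $K_E-\floor{\lambda\pi^*D}|_E$ is effective on $E$. Using that $\lambda$ is a candidate jumping number for $E$, one has $\floor{\lambda\pi^*D}|_E=(\lambda\pi^*D-\sum_i\{\lambda a_i\}E_i)|_E=-\sum_i\{\lambda a_i\}(E_i|_E)$, so $K_E-\floor{\lambda\pi^*D}|_E\equiv K_E+\sum_i\{\lambda a_i\}(E_i|_E)$. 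I need to bound the degree of this class from below: the degree is at least $\deg K_E$ plus a term controlled by $\{\lambda a_i\}$ and $\deg(E_i|_E)$, and the point is that when $d=\sum_i\deg(E_i|_E)\ge n+1$ the fractional parts $\{\lambda a_i\}$ are large enough (because $\lambda a=a-1$ so $\{\lambda a_i\}$ is close to $1$ appropriately, via the relation $\sum a_i(E_i|_E)\equiv -aE|_E$ and a careful accounting of how $E|_E$ enters) to push the total degree to be $\ge 0$. This fractional-part bookkeeping, extracting exactly the inequality $d\ge n+1$, is where I expect the technical heart of the argument to lie; it is the higher-dimensional analogue of the surface computation in \cite{ST07}, and it simultaneously should give the ``moreover'' clause by exhibiting $1-\tfrac1a$ as the contributed number.

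For \ref{pn-1,2} $\Rightarrow$ \ref{pn-1,3}, I would argue the contrapositive: if $d\le n$, then $E$ is contracted in the log canonical model. Here I would invoke Lemma \ref{lem:contraction_in_lcmodel_curves}: $E\cong\PP^{n-1}$ is covered by lines $\ell$, all in a single ray of the cone of curves, and $(K_Y+\Delta)\cdot\ell=(K_Y+\tilde D+E_\pi)\cdot\ell$. Now $(K_Y+E)\cdot\ell=K_{\PP^{n-1}}\cdot\ell=-n$ by adjunction, and $(\tilde D+(E_\pi-E))\cdot\ell=((\pi^*D)_{red}-E)\cdot\ell\cdot(\text{something})$—more precisely $\Delta-E=\tilde D+\sum_{j\ne}E_j$ whose restriction to $E$ has degree $d$ (after accounting that $\pi^*D|_E\equiv 0$ forces the strict transform and exceptional contributions to combine into the reduced total transform minus $E$), so $(K_Y+\Delta)\cdot\ell=-n+d\le 0$. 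By the ``resp.'' case of Lemma \ref{lem:contraction_in_lcmodel_curves} this contracts $E$ in the log canonical model of $(X,D_{red})$, contradicting \ref{pn-1,2}. The only subtlety I anticipate is matching the combinatorics of $\Delta$ (strict transform plus \emph{all} exceptional divisors of $\pi$) against the quantity $d$ (degree of intersection of $E$ with the other components of $\pi^{-1}(D)$, i.e. with $(\pi^*D)_{red}-E$), which should reconcile cleanly because the components of $\pi^{-1}(D)$ are exactly the strict transforms of components of $D$ together with the exceptional divisors, and any extra exceptional divisors of $\pi$ not meeting $D$'s total transform do not meet $E$ either under our admissible-blow-up hypothesis.
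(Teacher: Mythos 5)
Your route is the same as the paper's: (1) $\Rightarrow$ (2) via Corollary \ref{cor:contrib=>no_contraction}, (2) $\Rightarrow$ (3) by intersecting $K_Y+\Delta$ with the lines covering $E\cong\PP^{n-1}$ and invoking Lemma \ref{lem:contraction_in_lcmodel_curves} (this part of your sketch is correct, including the reconciliation of $\Delta$ with $(\pi^{-1}(D))_{red}$), and (3) $\Rightarrow$ (1) via Proposition \ref{prop:contribution_iff_globalsections} with the candidate $\lambda=1-\frac1a$. However, the implication (3) $\Rightarrow$ (1) --- which is where the actual content lies --- is left genuinely open. You correctly reduce it to showing $\deg\bigl(K_E-\floor{\lambda\pi^*D}|_E\bigr)=-n+\sum_j\{\lambda a_j\}d_j\geq 0$, but then defer the ``fractional-part bookkeeping'' as the expected technical heart without carrying it out. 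Numerical triviality of $\pi^*D|_E$ alone does not make these fractional parts large: every component with $a\mid a_j$ contributes $0$, so the hypothesis $d\geq n+1$ by itself does not yield the bound, and in fact the implication can fail for a badly chosen resolution.

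What is missing is precisely the input the paper takes from \cite{Vey91} combined with the hypothesis of Remark \ref{rmk:minimal_resolution_caution}. From \cite{Vey91} one has $\sum_j a_jd_j=\bigl(1+\sum_j d_jm_j\bigr)a$, equivalently $\deg(E|_E)=-\bigl(1+\sum_j d_jm_j\bigr)$, where $m_j$ counts how often the strict transform of $E_j'$ has been used as a blow-up center after the creation of $E$; since $\{\lambda a_j\}=\ceil{a_j/a}-a_j/a$, this converts your sum into $\sum_j\{\lambda a_j\}d_j=\sum_j\bigl(\ceil{\frac{a_j}{a}}-m_j\bigr)d_j-1$. The admissibility assumption on the centers (Remark \ref{rmk:minimal_resolution_caution}) gives $a_j>m_ja$, hence $\ceil{\frac{a_j}{a}}-m_j\geq 1$ for every $j$, and only then does $d\geq n+1$ give $\sum_j\{\lambda a_j\}d_j\geq d-1\geq n$, i.e.\ effectivity of $K_E-\floor{\lambda\pi^*D}|_E$ and the ``moreover'' clause. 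Note also a misattribution in your setup: Remark \ref{rmk:minimal_resolution_caution} is not what gives $\pi^*D|_E\equiv 0$ in $\Pic E$ (that comes from $E$ being contracted to a point, resp.\ $D$ being principal, as explained at the start of Section \ref{sec:positiveanswers}); its actual role in the proof is to supply the inequality $a_j>m_ja$, which is exactly the ingredient your sketch omits.
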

\begin{proof}%\todo{Bewijs aanpassen naar irreducibele componenten}
The implication \ref{pn-1,1} $\Rightarrow$ \ref{pn-1,2} is Corollary \ref{cor:contrib=>no_contraction}.

Write $\pi^*D = \sum_{i\in I} a_i E_i +aE$, where $\{E_i\mid i\in I\}$ are the components of $\pi^{-1}(D)$ different from $E$.
If $C$ is a line on $E$, then $(K_E+\sum_{i\in I} E_i|_E)\cdot C = d-n$. So by Lemma \ref{lem:contraction_in_lcmodel_curves}, if $d\leq n$, $E$ is contracted in the log canonical model. This proves \ref{pn-1,2} $\Rightarrow$ \ref{pn-1,3}.

It remains to prove that $E$ contributes $\lambda$ as a jumping number if $d\geq n+1$.
%If $d\leq n$, then $\deg\left(K_E+E^\circ|_E\right) = d-n\leq0$, so Corollary \ref{cor:necessary_condition} yields \ref{pn-1,1} $\Rightarrow$ \ref{pn-1,3}.
By Proposition \ref{prop:contribution_iff_globalsections}, it suffices to prove that $\deg(K_E-\floor{\lambda\pi^*D}|_E)\geq0$, or equivalently, that $\deg(\floor{\lambda\pi^*D}|_E)\leq -n$.

Let $E_j'$, $j\in J$, be the irreducible components of the intersections of $E$ with the other components of $\pi^{-1}(D)$. Denote $d_j=\deg(E_j')$ for every $j\in J$, so that $d=\sum_{j\in J}d_j$, and for every $j\in J$ denote $a_j=a_i$, where $i\in I$ is the index such that $E_j'$ is a component of $E_i\cap E$.
% Denote $d_i=\deg(E_i|_E)$ for $i\in I$, so that $d=\sum_{i\in I}d_i$. 
By \cite{Vey91}, we have
\begin{align*}
\deg(aE|_E) &= -\sum_{j\in J}a_jd_j,\\
\sum_{j\in J} d_ja_j &= \left(1+\sum_{j\in J} d_j m_j\right)a,
\end{align*}
where $m_j$ is the number of times that the strict transform of $E_j'$ on $E$ has been used as center of a blow-up in the resolution process.
This implies that
\begin{align*}
\deg\left(\floor{\lambda\pi^*D}|_E\right) &= \deg\left(\sum_{i\in I}\floor{a_i-\frac{a_i}{a}}E_i|_E+(a-1)E|_E\right)\\
&= \sum_{j\in J}\floor{a_j-\frac{a_j}{a}}d_j + (a-1)\deg(E|_E)\\
&= -\sum_{j\in J}\ceil{\frac{a_j}a}d_j - \deg(E|_E)\\
&= 1-\sum_{j\in J}\left(\ceil{\frac{a_j}a}-m_j\right)d_j.
\end{align*}
Now note that our assumptions on the resolution (Remark \ref{rmk:minimal_resolution_caution}) imply that $a_j>m_ja$ for every $j\in J$,  and hence $\ceil{\frac{a_j}a}-m_j\geq1$. Therefore, if $d\geq n+1$, we have $\deg(\floor{\lambda\pi^*D}|_E)\leq -n$. This completes the proof.
\end{proof}

\subsection{$\PP^{n-1}$ blown up at some centers on a hyperplane}

%Let $E$ be an exceptional divisor on $Y$ and assume that it is isomorphic to $\PP^{n-1}$, blown up at some centers $Z_j$, $j\in J$, all contained in the same hyperplane $H$. An example of this situation is a projective plane blown up at two points. We also assume that $E$ is created by blowing up at a point. Denote $\dim Z_j=k_j$ and $\pi^*D = \sum_{i\in I} a_iE_i + aE$, where $\{E_i\mid i\in I\}$ are the components of $\pi^{-1}D$ different from $E$.
%
%%We study whether or not $E$ contributes $\lambda=1-\frac{1}{a}$ as a jumping number.
%
%Denote by $d$ the total degree of the intersections of $E$ with other components of the total transform of $D$ at the moment of its creation, and by $\mu_j$ the total multiplicity of these components at $Z_j$ for every $j\in J$.

Throughout this section, we prove the following proposition.
\begin{prop}\label{prop:pn-1_with_centers}
%The exceptional divisor $E$ contributes jumping numbers to the pair $(X,D)$ if and only if $d\geq n+1$ and $d-\mu_j\geq k_j+2$ for every $j\in J$, and in this case, it contributes the jumping number $\lambda=1-\frac1a$.%, where $a$ is the multiplicity of $E$ in $\pi^*D$.

Let $D$ be an effective divisor on a smooth $n$-dimensional variety $X$, with $n\geq2$, and $\pi:Y\to X$ a log resolution of $(X,D)$. Let $E$ be an exceptional divisor of $\pi$ isomorphic to $\PP^{n-1}$, blown up at some centers $Z_l$, $l\in L$, all contained in the same hyperplane $H$. Assume that $E$ is created by a point blow-up, and denote $\dim Z_l=k_l$.

Denote by $d$ the total degree of the intersections of $E$ with other components of the total transform of $D$ at the moment of the creation of $E$, and by $\mu_l$ the total multiplicity of these components at $Z_l$ for every $l\in L$. Then the following are equivalent:
\begin{enumerate}
\item $E$ contributes jumping numbers to the pair $(X,D)$,\label{pn-1,1}
\item $E$ is not contracted in the log canonical model of $(X,D_{red})$,\label{pn-1,2}
\item $d\geq n+1$ and $d-\mu_l\geq k_l+2$ for every $l\in L$.\label{pn-1,3}
\end{enumerate}
Moreover, in this case, $E$ contributes the jumping number $\lambda=1-\frac1a$, where $a$ is the multiplicity of $E$ in $\pi^*D$.
\end{prop}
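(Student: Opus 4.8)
The plan is to follow the same three-step strategy used for Proposition \ref{prop:pn-1}, adapting each ingredient to the blown-up exceptional divisor $E$. As before, the implication \ref{pn-1,1} $\Rightarrow$ \ref{pn-1,2} is immediate from Corollary \ref{cor:contrib=>no_contraction}, so the real content lies in \ref{pn-1,2} $\Rightarrow$ \ref{pn-1,3} and \ref{pn-1,3} $\Rightarrow$ \ref{pn-1,1} (the latter also giving the ``Moreover'' part). First I would fix notation on $E$: write $\sigma:E\to\PP^{n-1}$ for the composition of the blow-ups at the centers $Z_l$ (or rather their strict transforms, performed in an appropriate order), let $h=\sigma^*\Oc(1)$ and let $\mathcal E_l$ be the (total transform of the) exceptional divisor over $Z_l$, so that $\Pic E$ is generated by $h$ and the $\mathcal E_l$. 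Then $K_E = -n\,h + \sum_{l\in L}(n-1-k_l)\mathcal E_l$ (up to the usual corrections when centers are nested, which is why the hypothesis that the $Z_l$ lie on the common hyperplane $H$ matters — it keeps the combinatorics of the successive blow-ups under control). The point is that $\Pic E$ has rank $1+|L|$, so ``effective in $\Pic E$'' can be checked by intersecting against a spanning set of curve classes: lines in the fibers of $\sigma$ over the $Z_l$, and the strict transform of a general line in $\PP^{n-1}$ (resp.\ a line meeting the center $Z_l$).

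For \ref{pn-1,2} $\Rightarrow$ \ref{pn-1,3}, I would apply Lemma \ref{lem:contraction_in_lcmodel_curves} twice. Taking $C$ to be the strict transform on $E$ of a general line in $\PP^{n-1}$, the intersection $(K_E + \sum_{i\in I} E_i|_E)\cdot C$ should come out to $d-n$ (exactly as in Proposition \ref{prop:pn-1}, since a general line avoids all the $Z_l$), forcing $d\ge n+1$ if $E$ survives. Taking instead $C$ to be a line in the exceptional $\PP^{k_l}$-bundle — or the strict transform of a line through $Z_l$ — and using that $\sum E_i|_E\cdot C$ picks up the contributions from components passing through $Z_l$ with total multiplicity recorded by $\mu_l$, together with $K_E\cdot C$, should yield the inequality $d-\mu_l\ge k_l+2$. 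These curves move in families covering Zariski-open subsets of $E$ and lie on fixed rays of the cone of curves, so the lemma applies; I would need to be a little careful that the relevant curve class is genuinely on an extremal-type ray and that the coefficients $a_i$ in $\tilde D + E_\pi$ versus the reduced $\sum E_i|_E$ match up (here one uses $0<a_i\le 1$ and, via Remark \ref{rmk:minimal_resolution_caution}, that no further blow-ups have altered the configuration below $E$).

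For \ref{pn-1,3} $\Rightarrow$ \ref{pn-1,1}, by Proposition \ref{prop:contribution_iff_globalsections} it suffices to show, for $\lambda = 1-\tfrac1a$, that $K_E - \floor{\lambda\pi^*D}|_E$ is effective in $\Pic E$. Writing $\pi^*D = aE + \sum a_i E_i$ and using $\lambda a_i = a_i - a_i/a$, the round-down restricted to $E$ becomes $\sum_i\floor{a_i - a_i/a}E_i|_E + (a-1)E|_E$, and I would expand each $E_i|_E$ and $E|_E$ in the basis $\{h,\mathcal E_l\}$ using the intersection-theoretic identities of \cite{Vey91} (the analogue of the two displayed formulas in the proof of Proposition \ref{prop:pn-1}, now bookkeeping separately the degree $d$ and the local multiplicities $\mu_l$ at each center, and the number of times each intersection curve has been blown up). The upshot should be that the $h$-coefficient of $K_E - \floor{\lambda\pi^*D}|_E$ is $\ge 0$ precisely when $d\ge n+1$, and the $\mathcal E_l$-coefficient is $\ge 0$ precisely when $d-\mu_l\ge k_l+2$, exactly matching condition \ref{pn-1,3}; then a class on $E$ (a $\PP^{n-1}$ blown up at centers in a hyperplane) with all these coefficients non-negative is effective, because it is a non-negative combination of $h$ and the $\mathcal E_l$ modulo effective corrections, which finishes both the equivalence and the identification of the contributed jumping number as $1-\tfrac1a$.

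The main obstacle I anticipate is the bookkeeping in the two intersection identities from \cite{Vey91} once several centers $Z_l$ of varying dimension $k_l$ are involved, especially keeping track of how the multiplicities $a_i$ of components through $Z_l$ aggregate into $\mu_l$ and how the ``number of times blown up'' corrections $m_j$ interact with the nested blow-up structure forced by all $Z_l\subset H$. Getting the effectivity criterion on the blown-up $\PP^{n-1}$ exactly aligned with the two inequalities in \ref{pn-1,3} — neither too weak nor too strong — is where the hypotheses (point blow-up creating $E$, common hyperplane $H$, the resolution convention of Remark \ref{rmk:minimal_resolution_caution}) will all have to be used, and where a naive computation is most likely to produce an off-by-one error in $k_l+2$.
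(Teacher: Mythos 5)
Your skeleton matches the paper's proof (Corollary \ref{cor:contrib=>no_contraction} for \ref{pn-1,1}$\Rightarrow$\ref{pn-1,2}; Lemma \ref{lem:contraction_in_lcmodel_curves} applied to strict transforms of general lines and of lines meeting a single $Z_l$ for \ref{pn-1,2}$\Rightarrow$\ref{pn-1,3}; Proposition \ref{prop:contribution_iff_globalsections} plus the relations of \cite{Vey91} for \ref{pn-1,3}$\Rightarrow$\ref{pn-1,1}), but the decisive step of the last implication is wrong as you describe it. You claim the computation will show that, in the basis $\{h,e_l\}$, the $h$-coefficient of $K_E-\floor{\lambda\pi^*D}|_E$ is $\geq 0$ exactly when $d\geq n+1$ and each $e_l$-coefficient is $\geq 0$ exactly when $d-\mu_l\geq k_l+2$. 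That is not what comes out: after using the relation $a_l=\sum_j\mu_{jl}a_j+(m_l-\sum_j\mu_{jl}m_j^{(l)}+\delta_l)a$ from \cite{Vey91}, the $e_l$-coefficient is $n-k_l-2+\ceil{\sum_j\mu_{jl}a_j/a}-\sum_j\ceil{a_j/a}\mu_{jl}$, which can be negative even when condition \ref{pn-1,3} holds (the ceiling of a sum can fall short of the sum of the weighted ceilings by almost $\mu_l$, and \ref{pn-1,3} puts no lower bound on $n-k_l-2-\mu_l$). The paper's proof gets around this by changing basis: since all $Z_l$ lie in the common hyperplane $H$, its strict transform $\tilde H=h-\sum_{l\in L}e_l$ is an effective class, and rewriting $K_E-\floor{\lambda\pi^*D}|_E$ as a combination of $\tilde H$ and the $e_l$ transfers the (large, positive) $h$-coefficient into each $e_l$-coefficient; only then do the estimates $\ceil{a_j/a}\geq m_j+1$ and $\ceil{\sum_j\mu_{jl}a_j/a}\geq\sum_j\mu_{jl}m_j+1$ (coming from $a_j>m_ja$, i.e., Remark \ref{rmk:minimal_resolution_caution}) yield non-negativity under $d\geq n+1$ and $d-\mu_l\geq k_l+2$. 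This is the actual role of the hypothesis $Z_l\subset H$ in the sufficiency direction, which your sketch attributes only to ``keeping the combinatorics of the blow-ups under control''; without the $\tilde H$ trick your argument stalls precisely at the $e_l$-coefficients.

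Three smaller points. First, the canonical class is $K_E=-nh+\sum_l(n-k_l-2)e_l$ (discrepancy $=\mathrm{codim}-1$), not $-nh+\sum_l(n-1-k_l)e_l$; this is exactly the off-by-one you feared. Second, effectivity of a class on $E$ cannot be ``checked by intersecting against a spanning set of curve classes''; duality with curves detects nef-type conditions, not effectivity, so that remark should be dropped — the paper proves effectivity by exhibiting the class as a non-negative combination of the effective classes $\tilde H$ and $e_l$. Third, in the contraction step the fibre lines inside the exceptional locus $e_l$ do not cover a Zariski-open subset of $E$, so Lemma \ref{lem:contraction_in_lcmodel_curves} does not apply to them; the correct family is the one you also mention, namely strict transforms of lines in $\PP^{n-1}$ meeting $Z_l$ transversally and avoiding the other centers, which gives $(K_E+\sum_j E_j')\cdot C=d-\mu_l-k_l-1$ and hence the inequality $d-\mu_l\geq k_l+2$. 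Finally, note that the effectivity computation only gives sufficiency of \ref{pn-1,3}; the ``precisely when'' is neither true nor needed, since necessity is supplied by the contraction argument.
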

\begin{rmk}An example of this situation is a projective plane blown up at two points, or, more generally, at any number of points on a fixed line.\end{rmk}
Note that \ref{pn-1,1} $\Rightarrow$ \ref{pn-1,2} is Corollary \ref{cor:contrib=>no_contraction}.
Before proving this proposition, we introduce some notations. %For every $i\in I$, we denote $E_i':=E_i\cap E$, and we consider this as a divisor on $E$.
Denote $\pi^*D = \sum_{i\in I} a_iE_i + aE$, where $\{E_i\mid i\in I\}$ are the components of $\pi^{-1}(D)$ different from $E$.

The Picard group of $E$ is isomorphic to $\Z\oplus \bigoplus_{l\in L}\Z$, with generators $h$, the pullback of a hyperplane in $\PP^{n-1}$, and $e_l$, $l\in L$, the exceptional divisors of the blow-ups at the $Z_l$.

Let $E_j'$, $j\in J$, be the irreducible components of the intersections of $E$ with the other components of $\pi^{-1}(D)$.

Since every exceptional divisor on $E$, created by blowing up at the $Z_l$, is the intersection of $E$ with one of the other components of $\pi^{-1}(D)$ (see \cite{Vey91}), we can view $L$ as a subset of $J$. Then, for $j\in J':=J\backslash L$ and $l\in L$, we can define $d_j$ and $\mu_{jl}$ so that we have the following equalities in $\Pic E$:
\begin{align*}
E_j' &= d_j h - \sum_{l\in L} \mu_{jl}e_l, \text{ and}\\
E_l' &= e_l.
\end{align*}
Note that with these notations $d=\sum_{j\in J'}d_j$ and $\mu_l = \sum_{j\in J'}\mu_{jl}$ for all $l\in L$. Also, as in the proof of Proposition \ref{prop:pn-1}, denote $a_j=a_i$ for every $j\in J$, where $i\in I$ is the index such that $E_j'$ is a component of $E_i\cap E$.

The canonical divisor of $E$ is given by \begin{equation}\label{comp:K_E}
K_E = -nh+\sum_{l\in L}(n-k_l-2)e_l.
\end{equation}

For every $l\in L$, the blow-up at $Z_l$ on $E$ arises from a blow-up in the ambient space. Denote the center of this blow-up by $C_l$, such that $E\cap C_l=Z_l$ (at this stage of the resolution).

\subsubsection{Contraction in the log canonical model}
Consider the family of strict transforms or pullbacks of lines $C$ in $\PP^{n-1}$, not intersecting any of the $Z_l$. Then we see by Lemma \ref{lem:contraction_in_lcmodel_curves} that $E$ is contracted in the log canonical model if $(K_E+\sum_{j\in J} E_j')\cdot C \leq 0$, which is equivalent to $d\leq n$. (This follows from $h\cdot C=1$ and $e_l\cdot C=0$ for all $l\in L$.)

Now fix one of the centers $Z_l$ and consider the family of strict transforms of lines $C$ in $\PP^{n-1}$ intersecting $Z_l$ transversally, and none of the other $Z_l$. (If $Z_l$ is a point, intersecting $Z_l$ transversally just means that the line contains $Z_l$.) Then we see that $E$ is contracted in the log canonical model if $(K_E+\sum_{j\in J} E_j')\cdot C\leq 0$, which is equivalent to $d-\mu_l\leq k_l+1$. (This follows from $h\cdot C = 1$, $e_l\cdot C = 1$, and $e_{l'}\cdot C = 0$ for $l'\neq l$.) This proves \ref{pn-1,2} $\Rightarrow$ \ref{pn-1,3}.

\subsubsection{Contribution of jumping numbers}
Now we show the implication \ref{pn-1,3} $\Rightarrow$ \ref{pn-1,1}, using Proposition \ref{prop:contribution_iff_globalsections}. 
We have
\begin{align}
-\floor{\lambda\pi^*D}|_E &= -(\pi^*D)|_E + \sum_{i\in I}\ceil{\frac{a_i}{a}}E_i|_E + E|_E\nonumber\\
&= \sum_{i\in I}\ceil{\frac{a_i}{a}}E_i|_E+E|_E\nonumber\\
&= \sum_{j\in J}\ceil{\frac{a_j}{a}}E_j'+E|_E\nonumber\\
&= E|_E + \sum_{j\in J'} \ceil{\frac{a_j}{a}}d_j h + \sum_{l\in L} \left(\ceil{\frac{a_l}{a}}-\sum_{j\in J'} \ceil{\frac{a_j}{a}}\mu_{jl}\right)e_l.\label{comp:floorD}
\end{align}

Moreover, as in the proof of Proposition \ref{prop:pn-1}, from \cite{Vey91} we have
\begin{align}
\sum_{j\in J'}d_ja_j &= \left(1+\sum_{j\in J'}d_jm_j\right)a,\label{comp:diai}\\
a_l &= \sum_{j\in J'}\mu_{jl}a_j + \left(m_l-\sum_{j\in J'}\mu_{jl}m_j^{(l)}+\delta_l\right)a\text{ for all $l\in L$, and }\label{comp:a_j}\\
aE|_E &= -\sum_{j\in J}a_jE_j' = -\left(\sum_{j\in J'}d_ja_j\right)h - \sum_{l\in L}\left(a_l-\sum_{j\in J'}\mu_{jl}a_j\right)e_l,\nonumber
\end{align}
where $m_j$ (respectively $m_j^{(l)}$) is the number of times that the strict transform of $E_j'$ on $E$ has been used as the center of a blow up after the creation of $E$ (respectively after blowing up at $C_l$), and $\delta_l=1$ if $Z_l=C_l$ (or equivalently, $C_l\subset E$), and $\delta_l=0$ otherwise.

Hence, using (\ref{comp:diai}) and (\ref{comp:a_j}) we obtain
\begin{align}
E|_E &= -\left(1+\sum_{j\in J'} d_jm_j\right)h - \sum_{l\in L}\left(m_{l}-\sum_{j\in J'} \mu_{jl}m_j^{(l)}+\delta_l\right)e_l\label{comp:E|_E}
\end{align}
because $\Pic E$ is torsion free. %Here $m_i$ (respectively $m_i^{(j)}$) is the number of times that the divisor $E_i'$ on $E$ has been used as the center of a blow up after the creation of $E$ (respectively after blowing up at $C_j$), and $\delta_j=1$ if $Z_j=C_j$ (or equivalently, $C_j\subset E$), and $\delta_j=0$ otherwise.

Combining (\ref{comp:K_E}), (\ref{comp:floorD}) and (\ref{comp:E|_E}), we have
\begin{align*}
K_E - \floor{\lambda\pi^*D}|_E = &\left(-n + \sum_{j\in J'} \ceil{\frac{a_j}{a}}d_j -\left(1+\sum_{j\in J'} d_jm_j\right)  \right)h\\
&+\sum_{l\in L}\left(n-k_l-2 + \ceil{\frac{a_{l}}{a}}-\sum_{j\in J'} \ceil{\frac{a_j}{a}}\mu_{jl} \right.\\&\left.\hspace{1cm}- \left(m_{l}-\sum_{j\in J'} \mu_{jl}m_j^{(l)}+\delta_l\right) \right)e_l\\
= &\left(-n -1+ \sum_{j\in J'} \left(\ceil{\frac{a_j}{a}}-m_j\right)d_j \right)h\\
&+\sum_{l\in L}\left(n-k_l-2 + \ceil{\sum_{j\in J'} \frac{\mu_{jl}a_j}{a}}-\sum_{j\in J'} \ceil{\frac{a_j}{a}}\mu_{jl}\right)e_l,
\end{align*}
using (\ref{comp:a_j}) to rewrite $\ceil{\frac{a_l}{a}}$.

If $\tilde H$ is the strict transform of $H$, we have $\tilde H = h-\sum_{l\in L}e_l$ in $\Pic E$. Then we obtain
%If we write $H=\tilde H + \sum_j e_j$, where $\tilde H$ is the strict transform of the hyperplane through the $Z_j$, we get
\begin{align*}
K_E - \floor{\lambda\pi^*D}|_E = &\left(-n -1+ \sum_{j\in J'} \left(\ceil{\frac{a_j}{a}}-m_j\right)d_j \right)\tilde H\\
&+\sum_{l\in L}\left( -k_l-3 + \sum_{l\in L'} \left(\ceil{\frac{a_j}{a}}-m_j\right)d_j \right.\\&\left.\hspace{1cm}+ \ceil{\sum_{j\in J'} \frac{\mu_{jl}a_j}{a}}-\sum_{j\in J'} \ceil{\frac{a_j}{a}}\mu_{jl} \right)e_l.
\end{align*}

So we see that $E$ contributes $\lambda=1-\frac1a$ as a jumping number if
\begin{align*}
\sum_{j\in J'} \left(\ceil{\frac{a_j}{a}}-m_j\right)d_j&\geq n+1,\text{ and}\\
\sum_{j\in J'} \left(\ceil{\frac{a_j}{a}}-m_j\right)d_j &\geq k_l+3+\sum_{j\in J'} \ceil{\frac{a_j}{a}}\mu_{jl}-\ceil{\sum_{j\in J'} \frac{\mu_{jl}a_j}{a}} \text{ (for all }l).
\end{align*}
Since $\frac{a_j}{a} > m_j$ for all $j\in J'$, we have $\ceil{\frac{a_j}{a}}\geq m_j+1$ and $\ceil{\sum_{j\in J'}\frac{\mu_{jl}a_j}{a}} \geq \sum_{j\in J'} \mu_{jl}m_j +1$ for every $l$, which implies that $\lambda$ is a jumping number contributed by $E$ if
\begin{align*}
\sum_{j\in J'} d_j&\geq n+1,\text{ and}\\
\sum_{j\in J'} d_j &\geq k_l+2+\sum_{j\in J'} \mu_{jl} \text{ (for all }l),
\end{align*}
i.e., if
\begin{align*}
d&\geq n+1,\text{ and}\\
d-\mu_l &\geq k_l+2 \text{ (for all }l).
\end{align*}
Indeed, if $\sum_{j\in J'} d_j\geq n+1$, then \[\sum_{j\in J'}\left(\ceil{\frac{a_j}{a}}-m_j\right)d_j\geq \sum_{j\in J'} d_j\geq n+1,\] and if $\sum_{j\in J'} (d_j-\mu_{jl}) \geq k_l+2$ for some $l\in L$, then \[\sum_{j\in J'}\left(\ceil{\frac{a_j}{a}}-m_j\right)(d_j-\mu_{jl})\geq\sum_{j\in J'} (d_j-\mu_{jl}) \geq k_l+2,\] and consequently
\begin{align*}
\sum_{j\in J'} \left(\ceil{\frac{a_j}{a}}-m_j\right) d_j &\geq k_l+2+\sum_{j\in J'} \ceil{\frac{a_j}{a}}\mu_{jl} - \sum_{j\in J'}\mu_{jl}m_j\\
&\geq k_l+3+\sum_{j\in J'} \ceil{\frac{a_j}{a}}\mu_{jl} - \ceil{\sum_{j\in J'}\frac{\mu_{jl}a_j}{a}}.
\end{align*}
This finishes the proof of Proposition \ref{prop:pn-1_with_centers}.

\section{A counterexample to Question \ref{q:cont<=intersections}}\label{sec:counterexample1}
If $\dim X=3$, then besides $\PP^2$, and $\PP^2$ blown up at some distinct points on a line, the easiest case is when $E$ is an exceptional divisor isomorphic to $\PP^2$, blown up at two infinitely near points.

So suppose we have such an $E$. Denote by $d$ the degree of the intersections of $E$ with other components of the total transform of $D$ after the moment of its creation, and by $\mu_1$ and $\mu_2$ the multiplicity of these intersections at the first, respectively the second point.

As in the proof of Proposition \ref{prop:pn-1_with_centers}, one can show that $E$ contributes the jumping number $1-1/a$ if $d\geq 4$, $d-\mu_1\geq 2$ and $2d-\mu_1-\mu_2\geq 5$, where $a$ is the multiplicity of $E$ in the total transform of $D$. Also, using Lemma \ref{lem:contraction_in_lcmodel_curves}, $E$ is contracted in the log canonical model if $d\leq 3$ (if we look at a general line), $d-\mu_1\leq 1$ (if we consider a line through the first point) or $2d-\mu_1-\mu_2\leq3$ (if we look at a degree 2 curve through both points). Hence, in these cases, $E$ does not contribute any jumping numbers.

In contrast with the previous results, this does not cover all the possibilities. Concretely, the cases where $d\geq 4$, $d-\mu_1\geq 2$ and $2d-\mu_1-\mu_2=4$ are still open. Example \ref{ex:not_1-1/a} already shows that the case $d=4$, $\mu_1=\mu_2=2$ cannot be classified in one of the two options listed above. The following examples show even more: equal intersection configurations can lead to different statements about contribution (and contraction in the log canonical model). Hence, the answer to Question \ref{q:cont<=intersections} is negative in general.

%The following example shows that contribution of jumping numbers can not be solely derived from the intersections with the other components of $\pi^*D$. Hence, the answer to Question \ref{q:cont<=intersections} is negative in general.

\begin{example}\label{ex:contr}
Let $D$ be the divisor given by $(xy^2-z^2)(x+z)=0$ in $X=\A^3$. We can construct a log resolution by blowing up at the origin first, with exceptional divisor $E_1$, followed by blowing up at the intersection of $E_1$ with the two components of $D$, with exceptional divisor $E_2$, further at the singular line on the strict transform of the first component of $D$, with exceptional divisor $E_3$, and then resolving the tangency of $E_1$ with the strict transform of the first component of $D$, using two more blow-ups, with exceptional divisors $E_4$ and $E_5$. If $\pi:Y\to X$ is the composite of these blow-ups, we  have 
\begin{align*}
\pi^*D &= \tilde D + 3E_1 + 6E_2 + 2E_3 + 4E_4 + 8E_5,\\
K_\pi &= 2E_1 + 4E_2 + E_3 + 3E_4 + 6E_5.
\end{align*}
One can see that $E_2$ is a projective plane, blown up at two infinitely near points. 
At the moment of its creation, the other components of the total transform of $D$ intersect $E_2$ in a curve of degree 2, a line tangent to this curve, and a line intersecting these curves transversally. This means we are in the situation $d=4$, $\mu_1=\mu_2=2$.
%The other components of $\pi^*D$ intersect $E_2$ (before the blow-ups) in a curve of degree 2 and a line through these points, and a line intersecting the other curves transversally. \todo{Beter beschrijven?}
One can see immediately that $\frac56$ is the log canonical threshold, contributed as a jumping number by $E_2$.
\end{example}

\begin{example}\label{ex:D5}
Now consider the $D_5$-singularity, given by $yz^2+x^2-y^4=0$. We construct a resolution $\pi:Y\to X$ by blowing up at the origin, with exceptional divisor $E_1$, followed by blowing up at the origin of the second chart, with exceptional divisor $E_2$, blowing up at the intersection of $E_1$, $E_2$ and the strict transform of $D_5$, with exceptional divisor $E_3$, and then twice at the intersection of $E_1$ with the strict transform of $D_5$, with exceptional divisors $E_4$ and $E_5$. Then $E_3$ is a projective plane, blown up at two infinitely near points, and the intersection configuration is the same as in the previous example. However, since $D_5$ is a log canonical singularity, it has no jumping numbers in $(0,1)$. (This can also be verified using the algorithm of \cite{BD16}.) We can conclude that contribution of jumping numbers by an exceptional divisor cannot be decided by only looking at the intersection configuration.
\end{example}
\begin{rmk}
We can say even more. In Examples \ref{ex:contr} and \ref{ex:D5}, the exceptional divisors we considered are even created in a similar way, i.e., blowing up at a point first, and then twice at a line intersecting the divisor transversally.
\end{rmk}

\section{A counterexample to Question \ref{q:cont=lcmodel}}\label{sec:couterexample2}

\begin{example}
Consider the divisor $D=\left\{(zy+x^2)^2+x^3y+xy^3=0\right\}$ in $X=\mathbb A^3$. We blow up at the origin first, and call the exceptional divisor $E_0$. Then, after four additional blow-ups centered in a line, corresponding to the minimal resolution of the singular curve $\tilde D\cap E_0$ in $E_0$, we obtain a log resolution $\pi:Y\to X$. We have
\begin{align*}
K_\pi &= 2E_0 + E_1+2E_2+3E_3+6E_4,\\
\pi^*D &= \tilde D + 4E_0+2E_1+4E_2+5E_3+10E_4,
\end{align*}
where $\tilde D$ denotes the strict transform of $D$.

The only candidate jumping numbers for $E_0$ in $(0,1]$ are $\frac34$ and $1$. Using Proposition \ref{prop:contribution_iff_globalsections}, one can show that they are not contributed by $E_0$ (similarly as in Example \ref{ex:not_1-1/a}).
With for example the algorithm of \cite{BD16}, one can compute that the jumping numbers are in fact the numbers in the set $\left\{\frac{7}{10}, \frac{9}{10},1\right\} + \mathbb Z_{\geq0}$, and then the statement for $\frac34$ also follows.

We show that $E_0$ is not contracted in the log canonical model using Lemma \ref{lem:contraction_in_lcmodel}. Since $D\sim 0$ on $X$, we have $\pi^*D\sim 0$ on $Y$, and therefore \[\tilde D \sim -4E_0-2E_1-4E_2-5E_3-10E_4.\] Hence, \[K_\pi+\tilde D + \sum_{i=0}^4 E_i \sim -E_0-E_2-E_3-3E_4.\] If $G$ is the strict transform of a general plane in $X$ through the origin, we have $G\sim -E_0$. Similarly, if $F$ is the strict transform of the divisor $\{y=0\}$, one can compute that \[F\sim -E_0-E_1-2E_2-2E_3-4E_4.\] Therefore, \[K_\pi+\tilde D + \sum_{i=0}^4 E_i \sim_\Q \frac14G+\frac34F+\frac34E_1+\frac12E_2+\frac12E_3.\] %Since the latter is an effective $\Q$-divisor where $E_0$ does not appear, it follows that $\left|K_Y+\tilde D+\sum_{i=0}^4E_i\right|_{E_0}$ is big over $X$, which implies that $E_0$ is not contracted in the log canonical model.\todo{why does it follow that this series is big? Because $G$ moves in two dimensions!}

Now, for $k\in\Z_{>0}$, \[h^0(E_0,k(G+3F+3E_1+2E_2+2E_3)|_{E_0})\geq h^0(E_0,kG|_{E_0})={{k+2}\choose k},\]  hence $\left|K_Y+\tilde D+\sum_{i=0}^4E_i\right|_{E_0}$ is big over $X$, and $E_0$ is not contracted in the log canonical model.
\end{example}

\appendix
\section[Multiplier ideals from an LC-resolution]{Multiplier ideals from an LC-resolution \\ (by Karen E.\ Smith\protect\footnote{The first author was partially supported by NSF Grant DMS \#1501625.} and Kevin Tucker\protect\footnote{The second author was partially supported by NSF Grant DMS \#1602070 and a fellowship from the Sloan foundation.})}\label{appendix}

We denote by $S$ a smooth complex variety, and $C$ an effective divisor on $S$.

\begin{defn}
Suppose $X$ is a normal complex variety,  $f: X \to S$ a proper birational morphism, and let $\Delta = (f^{*}C)_{\red}$.  Then $f \colon X \to S$ is an \emph{LC-resolution} of $(S,C)$ if $K_{X} + \Delta$ is $\Q$-Cartier, and for some (equivalently all) dominating log resolutions of $(S,C)$
\[  \xymatrix{ 
 X'   \ar[r]_{\theta} \ar@/^1pc/[rr]^{f'} &   X   \ar[r]_{f} &   S      
} \]
we have
\[
K_{X'} + \Delta' \geq \theta^{*}(K_{X} + \Delta)
\]
where $\Delta' = (f'^{*}C)_{\red}$.
\end{defn}

In other words, a proper birational morphism $f \colon X \to S$ is an LC-resolution if and only if $X$ is normal, 
 $(X, \Delta = (f^*C)_{\red})$ is log canonical pair,  and $X \setminus \Delta$ has canonical singularities.  In practice, one generally restricts to LC-resolutions which are an isomorphism outside $C$ (or even where $C$ is singular), so that the last requirement is automatic.  For more information on the types of singularities involved and a number of related constructions, see  \cite{Kol13} (particularly Section 1.4).

\begin{thm}\label{LC-res}
If $f \colon X \to S$ is an LC-resolution of $(S,C)$ and $\lambda \in \Q_{>0}$, then
\[
\J(S, \lambda C) = f_{*} \Oc_{X}( K_{f} - \lfloor \lambda f^{*}C \rfloor ) \, .
\]
In other words, the multiplier ideals of $(S,C)$ can be computed from any LC-resolution.
\end{thm}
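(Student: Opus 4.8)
The plan is to reduce the statement to the usual definition of the multiplier ideal via a dominating log resolution and then show that passing to that log resolution does not change the pushforward sheaf. Fix a log resolution $\theta: X' \to X$ of $(X,\Delta)$ such that the composite $f' = f\circ\theta : X'\to S$ is a log resolution of $(S,C)$; such a $\theta$ exists by Hironaka. By definition, $\J(S,\lambda C) = f'_*\Oc_{X'}(K_{f'}-\floor{\lambda f'^*C})$, so it suffices to prove
\[
f'_*\Oc_{X'}(K_{f'}-\floor{\lambda f'^*C}) \;=\; f_*\Oc_X(K_f-\floor{\lambda f^*C}),
\]
and since $f'_* = f_*\circ\theta_*$, this in turn follows once we show
\[
\theta_*\Oc_{X'}\!\bigl(K_{f'}-\floor{\lambda f'^*C}\bigr) \;=\; \Oc_X\!\bigl(K_f-\floor{\lambda f^*C}\bigr).
\]
Here one has to be slightly careful because $X$ is only normal, not smooth, so $K_f = K_{X'}-\theta^*K_X$ and $\floor{\lambda f^*C}$ should be interpreted so that the right-hand sheaf is the reflexive sheaf $\Oc_X(\lceil -\lambda f^*C\rceil + \text{(correction from }K_X\text{)})$; working with $\Q$-divisors and reflexive hulls, the identity is a local statement on $X$.

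The key computation is a divisor-theoretic one on $X'$. Write, using $f'^*C = \theta^* f^*C$,
\[
K_{f'}-\floor{\lambda f'^*C} \;=\; K_{\theta} + \theta^*\bigl(K_f - \lambda f^*C\bigr) + \bigl(\lambda\theta^* f^*C - \floor{\lambda \theta^* f^*C}\bigr),
\]
where $K_\theta = K_{X'}-\theta^*K_X$ and the last term is the (effective) fractional part $\{\lambda\theta^* f^*C\}$. The LC-resolution hypothesis says precisely that $K_{X'}+\Delta' \geq \theta^*(K_X+\Delta)$, i.e. $K_\theta \geq \theta^*\Delta - \Delta'$. Since $\Delta' = (f'^*C)_{\red}$ is reduced and supported on the exceptional and strict-transform locus, and $\Delta = (f^*C)_{\red}$, one checks that adding the effective fractional part $\{\lambda\theta^* f^*C\}$ to $K_\theta$ absorbs the possible negative contributions coming from $-\Delta' \le$ (round-down error), so that the total $\theta$-exceptional part of $K_{f'}-\floor{\lambda f'^*C} - \theta^*(K_f-\floor{\lambda f^*C})$ is effective. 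The standard projection-formula / negativity argument (as in the proof that multiplier ideals are independent of the log resolution, \cite[Proposition 7.5]{EV92} or \cite[Theorem 9.2.18]{LazII04}) then gives
\[
\theta_*\Oc_{X'}\bigl(K_{f'}-\floor{\lambda f'^*C}\bigr) \supseteq \Oc_X\bigl(K_f-\floor{\lambda f^*C}\bigr),
\]
while the reverse inclusion is automatic because $\theta_*\Oc_{X'}(K_\theta) = \Oc_X$ on the normal variety $X$ (discrepancies of $K_\theta$ are $\geq 0$ since $\theta$ is a resolution, using that $X$ has at worst canonical singularities away from $\Delta$ — which is exactly the LC-resolution condition — and adjusting along $\Delta$ is controlled by the $\Delta'\geq$ comparison).

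The main obstacle I anticipate is the bookkeeping at components of $\Delta'$ that dominate components of $\Delta$, versus the genuinely $\theta$-exceptional components: one must verify that along a prime divisor $F$ over $X$ the coefficient of $F$ in $K_\theta + \{\lambda\theta^* f^*C\} - \theta^*(\text{fractional part of }\lambda f^*C\text{ along }f(F))$ is $> -1$, i.e. that rounding down upstairs does not destroy more than the log-discrepancy gained from the LC-resolution inequality. This is where the hypothesis "$X\setminus\Delta$ has canonical singularities" does the real work: on exceptional divisors not lying over $\Delta$ the log discrepancy is $\geq 1$ (canonical), leaving room for the fractional part which is $< 1$; on divisors over $\Delta$ one uses instead that $\{\lambda f^*C\}$ downstairs and $K_\theta \geq \theta^*\Delta - \Delta'$ combine to keep the coefficient $> -1$. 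Once this per-divisor inequality is established, the proof is the routine pushforward argument sketched above.
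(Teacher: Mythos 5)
Your outline has the right overall shape (easy inclusion by pushing divisors forward along $\theta$; hard inclusion by comparing with a dominating log resolution, using the LC inequality $K_{X'}+\Delta'\geq\theta^{*}(K_X+\Delta)$ and the room left by the fractional part), but the way you set up the key computation has a genuine gap. Your central decomposition
\[
K_{f'}-\floor{\lambda f'^{*}C} \;=\; K_{\theta}+\theta^{*}\bigl(K_f-\lambda f^{*}C\bigr)+\{\lambda\theta^{*}f^{*}C\}
\]
and the per-divisor bookkeeping that follows require pulling back $K_X$ (through $K_\theta=K_{X'}-\theta^{*}K_X$), $K_f-\lambda f^{*}C$, and implicitly $\floor{\lambda f^{*}C}$ and $\{\lambda f^{*}C\}$ along $\theta$. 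None of these is $\Q$-Cartier in general: the definition of an LC-resolution only guarantees that $K_X+\Delta$ is $\Q$-Cartier (besides divisors pulled back from the smooth $S$), and $\floor{\lambda f^{*}C}$ is merely a Weil divisor on the normal variety $X$. So $\theta^{*}$ of these divisors -- and hence the inequality ``the $\theta$-exceptional part of $K_{f'}-\floor{\lambda f'^{*}C}-\theta^{*}(K_f-\floor{\lambda f^{*}C})$ is effective'' that your argument hinges on -- is not even well-defined; likewise ``discrepancies of $K_\theta$ are $\geq0$'' has no meaning along $\Delta$, and the canonical-singularities-off-$\Delta$ remark cannot repair a global statement. (Note also that the inclusion you call ``automatic'' is simply the easy direction: it follows from $\theta_{*}(K_{f'}-\floor{\lambda f'^{*}C})=K_f-\floor{\lambda f^{*}C}$ by pushing forward the divisor inequality, with no appeal to $\theta_{*}\Oc_{X'}(K_\theta)=\Oc_X$.)

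The paper's proof avoids exactly this problem by never pulling back anything except $\Q$-Cartier combinations, and it works with global sections over affine $S$ rather than with the sheaf identity on $X$. Given $\varphi$ with $\mathrm{div}(\varphi)+K_f-\floor{\lambda f^{*}C}\geq0$, one chooses $\epsilon>0$ small with $\Delta'-\epsilon f'^{*}(\lambda C)\geq\{\lambda f'^{*}C\}$, hence $\Delta-\epsilon f^{*}(\lambda C)\geq\{\lambda f^{*}C\}$; this absorbs the fractional part into the reduced divisor and upgrades the hypothesis to $\mathrm{div}(\varphi)-f^{*}K_S+(K_X+\Delta)-(1+\epsilon)f^{*}(\lambda C)\geq0$. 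Every term here is principal, pulled back from $S$, or the $\Q$-Cartier divisor $K_X+\Delta$, so this effective divisor can legitimately be pulled back by $\theta$; the LC inequality then replaces $\theta^{*}(K_X+\Delta)$ by $K_{X'}+\Delta'$, and rounding down (using $\Delta'-\epsilon f'^{*}(\lambda C)-\{\lambda f'^{*}C\}\geq0$ again) yields $\mathrm{div}(\varphi\of\theta)+K_{f'}-\floor{\lambda f'^{*}C}\geq0$, i.e.\ $\varphi\in\J(S,\lambda C)$. If you re-route your argument through this $\Q$-Cartier combination -- absorbing the fractional part into $\Delta$ before ever applying $\theta^{*}$ -- your plan becomes correct; as written, the key identity and the discrepancy estimate it feeds do not make sense under the stated hypotheses.
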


\begin{proof}
Choose a dominating resolution $f' \colon X' \to S$ as above.  Let $K_{f'}$ denote the (unique exceptionally supported) divisor $K_{X'}-f^{'*}K_S$. Since we have
\[
\theta_{*}(K_{f'} - \lfloor \lambda f'^{*}C \rfloor )= K_{f} - \lfloor \lambda f^{*}C \rfloor \, ,
\]
it follows immediately that
\[
\J(S, \lambda C) = f_{*}\theta_{*} \Oc_{X'}(K_{f'} - \lfloor \lambda f'^{*}C \rfloor) \subseteq f_{*} \Oc_{X}( K_{f} - \lfloor \lambda f^{*}C \rfloor ) \, .
\]
For the opposite inclusion, we may assume that $S$ is affine.  Suppose 
$
\varphi \in  H^{0}(X, K_{f} - \lfloor \lambda f^{*}C \rfloor)
$, so that $\varphi \in K(X)$ and
\[
\text{div}(\varphi) + K_{f} - \lfloor \lambda f^{*}C \rfloor \geq 0. \,
\]
Write $f^*(\lambda C) =  \lfloor \lambda f^{*}(\lambda C) \rfloor + \{ \lambda f^{*}(\lambda C) \}$, where 
$\{ D \}$ denotes the fractional part of a divisor $D$, so that
\begin{equation}\label{eq1}
\text{div}(\varphi) + K_{f} - f^*(\lambda C)  + \{ \lambda f^{*}(\lambda C)\}  \geq 0. 
\end{equation}

Choose a rational number $\epsilon>0$   sufficiently small  so that $\Delta' - \epsilon f'^{*}(\lambda C)  \geq \{ \lambda f'^{*}C \}$.  Pushing forward by $\theta$, this also implies $\Delta - \epsilon f^{*}(\lambda C)  \geq \{ \lambda f^{*}C \}$.
Therefore, in light of (\ref{eq1}),
\[
\text{div}(\varphi) - f^{*}K_{S}  + K_{X} + \Delta - (1+\epsilon)f^{*}(\lambda C) \geq 0
\]
and hence also (recalling that $K_{X} + \Delta$ is $\Q$-Cartier)
\[
\text{div}( \varphi \of \theta) - f'^{*}K_{S} + \theta^{*}(K_{X} + \Delta) - (1 + \epsilon)f'^{*}(\lambda C) \geq 0 \,.
\]
Since $X$ is an LC-resolution, we have $K_{X'} + \Delta' \geq \theta^{*}(K_{X} + \Delta)$.  Thus,
\[
\text{div}( \varphi \of \theta)  - f'^{*}K_{S} + K_{X'} +\Delta' - (1 + \epsilon)f'^{*}(\lambda C) \geq 0
\]
\[
\text{div}( \varphi \of \theta) + K_{f'} - \lfloor f'^{*}(  \lambda C) \rfloor + (\Delta' - \epsilon f'^{*}( \lambda C) - \{ \lambda f'^{*}C \}) \geq 0
\]
Taking the integer part of the left side yields
\[
\text{div}( \varphi \of \theta) + K_{f'} - \lfloor f'^{*}( \lambda C) \rfloor \geq 0 \, ,
\]
so that $\varphi \in f'_*\mathcal O_{X'}(K_{f'} -  \lfloor f'^{*}( \lambda C) \rfloor ) = \J(S, \lambda C). $ The proof is complete.
\end{proof}

\begin{rmk}[Log Canonical Models]
As a corollary to the established results of the log minimal model program, Odaka and Xu \cite{OX12} have verified the existence of a unique LC-resolution $f_{\mathrm{lc}} \colon X_\mathrm{lc} \to S$ so that if $\Delta_{\mathrm{lc}} = (f_{\mathrm{lc}}^*C)_{\mathrm{red}}$ then $K_{X_{\mathrm{lc}}} + \Delta_{\mathrm{lc}}$ is $f_{\mathrm{lc}}$-ample.
\end{rmk}

\addcontentsline{toc}{chapter}{Bibliography}
\bibliographystyle{amsalpha}
\bibliography{Bibliography}

\providecommand{\bysame}{\leavevmode\hbox to3em{\hrulefill}\thinspace}
\providecommand{\MR}{\relax\ifhmode\unskip\space\fi MR }
% \MRhref is called by the amsart/book/proc definition of \MR.
\providecommand{\MRhref}[2]{%
  \href{http://www.ams.org/mathscinet-getitem?mr=#1}{#2}
}
\providecommand{\href}[2]{#2}
\begin{thebibliography}{ELSV04}

\bibitem[BD16]{BD16}
Hans {Baumers} and Ferran {Dachs-Cadefau}, \emph{{Computing jumping numbers in
  higher dimensions}}, ArXiv e-prints:1603.00787 (2016).

\bibitem[Bir12]{Bir12}
Caucher Birkar, \emph{Existence of log canonical flips and a special {LMMP}},
  Publ. Math. Inst. Hautes \'Etudes Sci. (2012), 325--368. \MR{2929730}

\bibitem[ELSV04]{ELSV04}
Lawrence Ein, Robert Lazarsfeld, Karen~E. Smith, and Dror Varolin,
  \emph{{Jumping coefficients of multiplier ideals}}, Duke Math. J.
  \textbf{123} (2004), no.~3, 469--506.

\bibitem[EV92]{EV92}
H{\'e}l{\`e}ne Esnault and Eckart Viehweg, \emph{{Lectures on vanishing
  theorems}}, {DMV Seminar}, vol.~20, Birkh{\"a}user Verlag, Basel, 1992.

\bibitem[Hir64]{Hir64}
Heisuke Hironaka, \emph{Resolution of singularities of an algebraic variety
  over a field of characteristic zero. {I}, {II}}, Ann. of Math. (2) {\bf 79}
  (1964), 109--203; ibid. (2) \textbf{79} (1964), 205--326. \MR{0199184}

\bibitem[HX13]{HX13}
Christopher~D. Hacon and Chenyang Xu, \emph{Existence of log canonical
  closures}, Invent. Math. \textbf{192} (2013), no.~1, 161--195. \MR{3032329}

\bibitem[KK10]{KK10}
J{\'a}nos Koll{\'a}r and S{\'a}ndor~J. Kov{\'a}cs, \emph{Log canonical
  singularities are {D}u {B}ois}, J. Amer. Math. Soc. \textbf{23} (2010),
  no.~3, 791--813. \MR{2629988}

\bibitem[KM98]{KM98}
J{\'a}nos Koll{\'a}r and Shigefumi Mori, \emph{Birational geometry of algebraic
  varieties}, Cambridge Tracts in Mathematics, vol. 134, Cambridge University
  Press, Cambridge, 1998, With the collaboration of C. H. Clemens and A. Corti,
  Translated from the 1998 Japanese original. \MR{1658959}

\bibitem[Kol97]{Kol97}
J{\'a}nos Koll{\'a}r, \emph{{Singularities of pairs}}, {Algebraic
  geometry---{S}anta {C}ruz 1995}, {Proc. Sympos. Pure Math.}, vol.~62, Amer.
  Math. Soc., Providence, RI, 1997, pp.~221--287. \MR{1492525 (99m:14033)}

\bibitem[Kol13]{Kol13}
\bysame, \emph{Singularities of the minimal model program}, Cambridge Tracts in
  Mathematics, vol. 200, Cambridge University Press, Cambridge, 2013, With a
  collaboration of S{\'a}ndor Kov{\'a}cs. \MR{3057950}

\bibitem[Laz04a]{LazI04}
Robert Lazarsfeld, \emph{{Positivity in algebraic geometry. {I}}}, {Ergebnisse
  der Mathematik und ihrer Grenzgebiete.}, vol.~48, Springer-Verlag, Berlin,
  2004.

\bibitem[Laz04b]{LazII04}
\bysame, \emph{{Positivity in algebraic geometry. {II}}}, {Ergebnisse der
  Mathematik und ihrer Grenzgebiete.}, vol.~49, Springer-Verlag, Berlin, 2004.

\bibitem[Lib83]{Lib83}
Anatoly~S. Libgober, \emph{{Alexander invariants of plane algebraic curves}},
  {Singularities, {P}art 2 ({A}rcata, {C}alif., 1981)}, {Proc. Sympos. Pure
  Math.}, vol.~40, Amer. Math. Soc., Providence, RI, 1983, pp.~135--143.
  \MR{713242 (85h:14017)}

\bibitem[LV90]{LV90}
Fran\c{c}ois Loeser and Michel Vaqui{\'e}, \emph{{Le polyn{\^o}me d'{A}lexander
  d'une courbe plane projective}}, Topology \textbf{29} (1990), no.~2,
  163--173. \MR{1056267 (91d:32053)}

\bibitem[Mus12]{Mus12}
Mircea Musta{\c{t}}{\u{a}}, \emph{I{MPANGA} lecture notes on log canonical
  thresholds}, Contributions to algebraic geometry, EMS Ser. Congr. Rep., Eur.
  Math. Soc., Z\"urich, 2012, Notes by Tomasz Szemberg, pp.~407--442.
  \MR{2976952}

\bibitem[OX12]{OX12}
Yuji Odaka and Chenyang Xu, \emph{Log-canonical models of singular pairs and
  its applications}, Math. Res. Lett. \textbf{19} (2012), no.~2, 325--334.
  \MR{2955764}

\bibitem[ST07]{ST07}
Karen~E. Smith and Howard~M. Thompson, \emph{{Irrelevant exceptional divisors
  for curves on a smooth surface}}, {Algebra, geometry and their interactions},
  {Contemp. Math.}, vol. 448, Amer. Math. Soc., Providence, RI, 2007,
  pp.~245--254.

\bibitem[ST09]{STprivate}
Karen~E. Smith and Kevin Tucker, \emph{Multiplier ideals from an
  lc-resolution}, private communication, 2009.

\bibitem[Tuc10]{Tuc10}
Kevin Tucker, \emph{{Jumping numbers on algebraic surfaces with rational
  singularities}}, Trans. Amer. Math. Soc. \textbf{362} (2010), no.~6,
  3223--3241.

\bibitem[Vaq92]{Vaq92}
Michel Vaqui{\'e}, \emph{{Irr{\'e}gularit{\'e} des rev{\^e}tements cycliques
  des surfaces projectives non singuli{\`e}res}}, Amer. J. Math. \textbf{114}
  (1992), no.~6, 1187--1199. \MR{1198299 (94d:14015)}

\bibitem[Vaq94]{Vaq94}
\bysame, \emph{{Irr{\'e}gularit{\'e} des rev{\^e}tements cycliques}},
  {Singularities ({L}ille, 1991)}, {London Math. Soc. Lecture Note Ser.}, vol.
  201, Cambridge Univ. Press, Cambridge, 1994, pp.~383--419. \MR{1295085
  (95f:14030)}

\bibitem[Vey91]{Vey91}
Willem Veys, \emph{{Congruences for numerical data of an embedded resolution}},
  Compositio Math. \textbf{80} (1991), no.~2, 151--169. \MR{1132091
  (93d:14027)}

\bibitem[Vey97]{Vey97}
\bysame, \emph{Zeta functions for curves and log canonical models}, Proc.
  London Math. Soc. (3) \textbf{74} (1997), no.~2, 360--378. \MR{1425327}

\bibitem[Xu16]{Xu16}
Chenyang Xu, \emph{Motivic zeta function via dlt modification}, Michigan Math.
  J. \textbf{65} (2016), no.~1, 89--103. \MR{3466817}

\end{thebibliography}

\end{document}